\newtheorem{theorem}{Theorem}[section]
\newtheorem{proposition}[theorem]{Proposition}
\newtheorem{corollary}[theorem]{Corollary}
\theoremstyle{definition}
\newtheorem{definition}[theorem]{Definition}
\newtheorem{example}[theorem]{Example}
\newtheorem{remark}[theorem]{Remark}
\newcommand{\e}{\varepsilon}
\DeclareMathOperator{\re}{Re}
\DeclareMathOperator{\na}{NA}
\numberwithin{equation}{section}
\newcommand{\R}{\mathbb{R}} 
\newcommand{\C}{\mathbb{C}} 
\newcommand{\K}{\mathbb{K}} 
\def\K{{\mathbb K}}
\def\R{{\mathbb R}}
\def\C{{\mathbb C}}
\def\re{\hbox{\rm Re~}}
\def\ignora#1{}
\def\n3#1{\left\vert  \! \left\vert \! \left\vert \, #1 \, \right\vert \!
  \right\vert \! \right\vert }
\title[The BPBpp]
{The Bishop-Phelps-Bollob\'as point property}
\author[S. Dantas]{Sheldon Dantas}
\address{Departamento de An\'{a}lisis Matem\'{a}tico, Universidad de Valencia, Doctor Moliner 50, 46100 Burjasot (Valencia), Spain}
\email{sheldon.dantas@uv.es}
\author[S. K. Kim]{Sun Kwang Kim}
\address{Department of Mathematics, Kyonggi University, 443-760 (Suwon), Republic of Korea}
\email{sunkwang@kgu.ac.kr}
\author[H. J. Lee]{Han Ju Lee}
\address{Department of Mathematics Education, Dongguk University, 100-715 (Seoul), Republic of Korea}
\email{hanjulee@dongguk.edu}
\thanks{
The first author were supported by MINECO MTM2014-57838-C2-2-P and CAPES, Doutorado Pleno CSF, BEX 0050/13-0. The second author partially was supported by Basic Science Research Program through the National Research Foundation of Korea(NRF) funded by the Ministry of Education, Science and Technology (2014R1A1A2056084). The third author was supported by the research program of Dongguk University, 2014.}
\subjclass[2010]{Primary: 46B04; Secondary: 46B20, 46B28, 46B25}
\date{}
\keywords{Bishop-Phelps theorem, Bishop-Phelps-Bollob\'as property, norm attaining, bilinear forms}
\dedicatory{}
\begin{document}

\begin{abstract}
In this article, we study a version of the Bishop-Phelps-Bollob\'as property. We investigate a pair of Banach spaces $(X, Y)$ such that every operator from $X$ into $Y$ is approximated by operators which attains its norm at the same point where the original operator almost attains its norm. In this case, we say that such a pair has the Bishop-Phelps-Bollob\'as point property (BPBpp). We characterize uniform smoothness in terms of BPBpp and we give some examples of pairs $(X, Y)$ which have and fail this property. Some stability results are obtained about $\ell_1$ and $\ell_\infty$ sums of Banach spaces and we also study this property for bilinear mappings.
 \end{abstract}

\maketitle

\section{Introduction}
\par

One of the most remarkable theorems in the `norm attaining function theory' is the Bishop-Phelps theorem which says that every bounded linear functional can be approximated by norm attaining ones \cite{BP}. At the end of the article of Bishop and Phelps, they asked if it could be true a Bishop-Phelps type theorem for bounded linear operators, but Lindenstrauss \cite{Lind} answered this question in a negative sense. 

 For more information, let us begin with some definitions and notations. Let $X$ be a Banach space over a scalar field $\K$ which is the real field  $\R$ or the  complex field  $\C$. We denote by $S_X$, $B_X$ and $X^*$ the unit sphere, the closed unit ball and the topological dual of $X$, respectively. We say that a bounded linear functional $x^* \in X^*$ {\it attains its norm} if there exists some point $x \in S_X$ such that $|x^*(x)| = \|x^*\|$. In this case, we say that $x^*$ is a {\it norm attaining functional}. We denote by $\na(X)$ the set of all norm attaining functionals. Then the Bishop-Phelps theorem \cite{BP} says that $\overline{\na(X)} = X^*$.  For the set of all bounded linear operators between the Banach spaces $X$ and $Y$, we write $\mathcal{L}(X, Y)$.  In an analogous way, we define a norm attaining operator. Indeed, an operator $T: X \longrightarrow Y$ is called {\it norm attaining} whenever there is $x \in S_X$ such that $\|T(x)\|= \|T\| := \sup_{x \in S_X} \|T(x)\|$, and we use $\na(X, Y)$ for the set of such operators. Note that $X^*=\mathcal{L}(X, \K)$ and $\na(X)=\na(X,\K)$.

In 1970, Bollob\'as \cite{Bol} proved an strengthening of the Bishop-Phelps theorem. He proved that if a functional $x^*$ almost attains its norm at a point $x$, then we can find a  functional $y^*$ which is close to $x^*$ and a point $y$ which is close to $x$ such that $y^*$ attains its norm at $y$. We enunciate this theorem which is nowadays known as the Bishop-Phelps-Bollob\'as theorem.

\begin{theorem} \emph{(Bishop-Phelps-Bollob\'as theorem \cite{Bol}, \cite[Corollary 2.4]{CKMM})} Let $X$ be a Banach space. Let $0<\e <2$ and suppose that $x \in B_X$ and $x^* \in B_{X^*}$ satisfy
\begin{equation*}
\re x^*(x) > 1 - \frac{\e^2}{2}.
\end{equation*}
Then, there are $y \in S_X$ and $y^* \in S_{X^*}$ such that
\begin{equation*}
|y^*(y)| = 1, \ \ \|y - x\| < \e \ \ \mbox{and} \ \ \|y^* - x^*\| < \e.
\end{equation*}
\end{theorem}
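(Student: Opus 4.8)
The plan is to run the classical Bishop--Phelps cone argument, tracking the constants carefully enough to convert the quantitative gap $\e^2/2$ in the hypothesis into the two $\e$-estimates in the conclusion. First I would reduce to the real case. Treating $X$ as a real Banach space and writing $f:=\re x^*$ (so $\|f\|=\|x^*\|\le 1$), it suffices to produce $y\in S_X$ and a norm-one real-linear functional $g=\re y^*$ with $g(y)=1$ and $\|g-f\|<\e$: the complex functional is then recovered by $y^*(z):=g(z)-i\,g(iz)$, which satisfies $\|y^*\|=\|g\|$, $|y^*(y)|=1$ (from $g(y)=\|g\|=1$), and $\|y^*-x^*\|=\|g-f\|$. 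Note also that $\re x^*(x)>1-\e^2/2$ forces $\|x^*\|>1-\e^2/2>0$, so $f\neq 0$; the substantive regime is $\e$ small, and I will flag below why the complementary range is harmless.

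Next I would fix $k:=2/\e$ and introduce the Bishop--Phelps cone
$$K:=\{\,z\in X:\|z\|\le k\,\re x^*(z)\,\},$$
with the order $u\preceq v\iff v-u\in K$. A Zorn's lemma argument gives a $\preceq$-maximal element $y$ of $\{z\in B_X:x\preceq z\}$: along an increasing chain $(z_\alpha)$ the quantities $\re x^*(z_\alpha)$ increase and stay $\le 1$, while $\|z_\beta-z_\alpha\|\le k(\re x^*(z_\beta)-\re x^*(z_\alpha))$ shows the chain is Cauchy, so completeness of $X$ and closedness of $K$ provide an upper bound inside $B_X$. From $x\preceq y$ and $\re x^*(y)\le 1$ I get the first estimate,
$$\|y-x\|\le k\big(\re x^*(y)-\re x^*(x)\big)<k\cdot\frac{\e^2}{2}=\e ,$$
and along the way $\re x^*(y)\ge\re x^*(x)>1-\e^2/2$.

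Then I would extract the functional from maximality. Maximality first forces $\|y\|=1$: if $\|y\|<1$ one can move a short step along a direction $v\in K$ (such a nonzero $v$ exists because $k\,\|x^*\|>1$ in the relevant range) while remaining in $B_X$, producing a strictly larger element and contradicting maximality. Maximality also yields $(y+K)\cap B_X=\{y\}$, hence $(y+K)\cap\intt B_X=\es$, so the Hahn--Banach separation theorem gives a norm-one $g=\re y^*$ with $g(y)=\sup_{B_X}g=1$ and $g\ge 0$ on $K$. In particular $y^*$ attains its norm at $y$ with $|y^*(y)|=1$.

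The hard part is the last estimate $\|g-f\|<\e$, which must be squeezed out of the single fact $g\ge 0$ on $K$. Since $K=\bigcap_{\|\phi\|\le 1}\{\,kf-\phi\ge 0\,\}$, its dual cone is the weak-$*$ closed cone generated by $\{kf-\phi:\|\phi\|\le 1\}$; as $g$ lies in this dual cone, it has the form $g=\Lambda k f-\psi$ for some $\Lambda\ge 0$ and $\psi$ with $\|\psi\|\le\Lambda$. Evaluating at $y$, the identity $\Lambda k f(y)-\psi(y)=1$, together with $|\psi(y)|\le\Lambda$ and $f(y)\in(1-\e^2/2,\,1]$, pins $\Lambda$ down so that $\Lambda k$ is close to $1$ and $\Lambda$ is close to $\e/2$; substituting into $g-f=(\Lambda k-1)f-\psi$ and using $\|g-f\|\le|\Lambda k-1|+\Lambda$ gives $\|g-f\|<\e$ precisely for $k=2/\e$. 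I expect the delicate point to be exactly this bookkeeping --- justifying the dual-cone representation and checking that each of $|\Lambda k-1|$ and $\Lambda$ stays below $\e/2$, where the $\e^2/2$ of the hypothesis is what makes the margins close. Finally, the degenerate range (large $\e$, where $k\,\|x^*\|\le 1$ may fail) is harmless, since there the target inequalities are nearly vacuous: any two points of $B_X$ lie within distance $2$, and a crude application of the Bishop--Phelps theorem $\overline{\na(X)}=X^*$ already suffices.
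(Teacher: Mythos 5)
This theorem is not proved in the paper at all --- it is the \emph{sharp} Bishop--Phelps--Bollob\'as theorem, quoted from Bollob\'as \cite{Bol} and from \cite[Corollary 2.4]{CKMM} --- so your attempt has to be measured against those sources, and it breaks down exactly where the sharp version differs from the classical one. The first half of your cone argument is fine (say for $0<\varepsilon<1$, where $k\|x^*\|>1$ is automatic): the reduction to the real case, the Zorn's lemma maximal point $y$, the estimate $\|y-x\|\le k\bigl(f(y)-f(x)\bigr)<k\varepsilon^2/2=\varepsilon$ for $k=2/\varepsilon$, and the existence of a norm-one $g$ with $g(y)=1$ and $g\ge 0$ on $K$. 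The gap is the final estimate $\|g-f\|<\varepsilon$. Hahn--Banach separation hands you \emph{some} admissible $g$, and admissible functionals at distance exactly $\varepsilon$ from $f$ do exist, so no bookkeeping can force a strict inequality for the $g$ the argument produces. Concretely, take $X=\R^2$ with the $\ell_\infty$-norm, $f=(\varepsilon/2,\,1-\varepsilon/2)\in S_{X^*}$, $x=y=(1,1)$, $k=2/\varepsilon$. Then $f(x)=1>1-\varepsilon^2/2$; the cone $K=\{z:\|z\|_\infty\le z_1+(2/\varepsilon-1)z_2\}$ lies in the half-plane $\{z_2\ge 0\}$, so $y$ is already maximal; and $g=(0,1)$ is a legitimate separating functional ($g\ge 0$ on $K$, $g(y)=\|g\|_1=1$), yet $\|g-f\|_1=\varepsilon$, not $<\varepsilon$. (The theorem is of course true in this example --- take $y^*=f$ itself --- but your argument has no mechanism to select that functional.) Moreover the dual-cone representation $g=\Lambda kf-\psi$ with $\|\psi\|\le\Lambda$ forces $\Lambda=\varepsilon/(2-\varepsilon)>\varepsilon/2$ here, so the claimed pinning ``$\Lambda$ close to $\varepsilon/2$, $\Lambda k$ close to $1$'' is false for every $\varepsilon\in(0,2)$; running the constraints honestly yields only bounds of the type $\|g-f\|\le 2\varepsilon/(2-\varepsilon)$ or $\varepsilon/(1-\varepsilon)$, i.e.\ $\varepsilon+O(\varepsilon^2)$.

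This is precisely the difference between Bollob\'as's original theorem --- which the cone argument does prove, with a lossy constant such as $\varepsilon+\varepsilon^2$ in one of the two estimates --- and the sharp statement above, which is the contribution of \cite{CKMM}. Their proof is structurally different: it applies a Phelps/Br{\o}ndsted--Rockafellar-type variational lemma, producing a norm-\emph{attaining} functional near $x^*$ whose norm is not $1$, and then renormalizes; the sharp constant survives renormalization because the free parameter is chosen using the slack $s=1-\|x^*\|$, the point being that the effective gap $\|x^*\|-\re x^*(x)$ is less than $\varepsilon^2/2-s$, which exactly compensates the renormalization cost of order $s$. Your argument never sees this mechanism, because the separation step outputs a norm-one functional with no freedom of selection. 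Finally, your treatment of the degenerate range ($k\|x^*\|\le 1$, which can occur when $\varepsilon>1$) is also incorrect: the conclusion is not ``nearly vacuous'' there, since $\varepsilon<2$ while points of $B_X$ can be at distance $2$; and Bishop--Phelps density gives a norm-attaining functional near $x^*$ with \emph{no control whatsoever on the point where it attains its norm}, so the requirement $\|y-x\|<\varepsilon$ cannot be obtained that way --- controlling the attaining point is the whole content of Bollob\'as's refinement.
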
	
To generalize this theorem to operators, Acosta, Aron, Garc\'ia and Maestre introduced the Bishop-Phelps-Bollob\'as propety. Recall that  a pair of Banach spaces $(X, Y)$ is said to have the {\it Bishop-Phelps-Bollob\'as property} (BPBp, for short) when given $\e > 0$, there exists $\eta(\e) > 0$ such that whenever $T \in \mathcal{L}(X, Y)$ with $\|T\| = 1$ and $x_0 \in S_X$ are such that
\begin{equation*}
\|T(x_0)\| > 1 - \eta(\e),
\end{equation*}
there are $S \in \mathcal{L}(X, Y)$ with $\|S\| = 1$ and $x_1 \in S_X$ such that
\begin{equation*}
\|S(x_1)\| = 1, \ \ \ \|x_1 - x_0\| < \e \ \ \ \mbox{and} \ \ \ \|S - T\| < \e.
\end{equation*}
In this case, we say that the pair $(X, Y)$ has the BPBp with the function $\e \longmapsto \eta(\e)$. In recent years, many interesting problems was solved on this topic and so there is an extensive bibliography about it. For example, we refer the papers \cite{AAGM, ABGM, ACK, ACKLM, CGK, CKMM, CS, Dantas, KL, Martin} for more information and background.

In this article, we study a property for a pair of Banach spaces $(X,Y)$ which insures that it is possible to approximate an operator from $X$ into $Y$ by operators which attains its norm at the same point where the original operator almost attains its norm.
\begin{definition}  A pair of Banach spaces $(X, Y)$ is said to have the {\it Bishop-Phelps-Bollob\'as point property} (BPBpp, for short) if given $\e > 0$, there exists $\eta(\e) > 0$ such that whenever $T \in \mathcal{L}(X, Y)$ with $\|T\| = 1$ and $x \in S_X$ satisfy
\begin{equation*}
\|T(x)\| > 1 - \eta(\e),
\end{equation*}
there exists $S \in \mathcal{L}(X, Y)$ with $\|S\| = 1$ such that
\begin{equation*}
\|S(x)\| = 1 \ \ \ \mbox{and} \ \ \ \|S - T\| < \e.
\end{equation*}
In this case, we say that the pair $(X, Y)$ has the BPBpp with the function $\e \longmapsto \eta(\e)$.
\end{definition}

Note that the BPBpp is stronger than the BPBp by definition. We also observe that in the definition of the BPBpp it is possible to choose $T$ with $\|T\| \leq 1$ instead of $\|T\| = 1$ by changing the parameters. It is worth to mention that a dual property, called the uniform strong Bishop-Phelps-Bollob\'as property (uniform sBPBp),  was already considered \cite{Dantas}.  The difference between that property and the BPBpp is that we perturb the operator not the norm attaining point.

Let $X$, $Y$ and $Z$ be Banach spaces. We denote by $B(X \times Y, Z)$ the set of all bilinear mappings from $X \times Y$ into $Z$. It is a Banach space equipped with the norm
\[ \|B\| = \sup \{ \|B(x, y)\| : x\in B_X, y\in B_Y\}. \]
Recall that the pair $(X \times Y, Z)$ has the {\it Bishop-Phelps-Bollob\'as property for bilinear mappings} (BPBp for bilinear mappings, for short) when given $\e > 0$, there exists $\eta(\e) > 0$ such that whenever $B \in B(X \times Y, Z)$ with $\|B\| = 1$ and $(x_0, y_0) \in S_X \times S_Y$ are such that

\begin{equation*}
\|B(x_0, y_0)\| > 1 - \eta(\e),
\end{equation*}
there are $A \in B(X \times Y, Z)$ with $\|A\| = 1$ and $(x_1, y_1) \in S_X \times S_Y$ such that
\begin{equation*}
\|A(x_1, y_1)\| = 1, \ \|x_1 - x_0\| < \e, \ \|y_1 - y_0\| <\e \ \mbox{and} \ \|A - B\| < \e.
\end{equation*}
It was proved that the pair $(\ell_1 \times \ell_1, \K)$ fails the BPBp for bilinear forms \cite[Theorem 2]{CS} meanwhile the pair $(\ell_1, \ell_{\infty})$ has the BPBp for operators \cite[Theorem 4.1]{AAGM}. On the other hand, if $X$ and $Y$ are uniformly convex Banach spaces, then the pair $(X \times Y, Z)$ has the BPBp for bilinear mappings for any Banach space $Z$ \cite[Theorem 2.2]{ABGM}. For the bilinear case, we introduce the following stronger property.

\begin{definition} We say that a pair of Banach spaces $(X, Y)$ has the {\it Bishop-Phelps-Bollob\'as point property for bilinear mappings} (BPBpp for bilinear mappings, for short) if given $\e > 0$, there exists $\eta(\e) > 0$ such that whenever $B \in B(X \times Y, Z)$ with $\|B\| = 1$ and $(x_0, y_0) \in S_X \times S_Y$ satisfy
	\begin{equation*}
	\|B(x_0, y_0)\| > 1 - \eta(\e),
	\end{equation*}
there exists $A \in B(X \times Y, Z)$ with $\|A\| = 1$ such that
\begin{equation*}
\|A(x_0, y_0)\| = 1 \ \ \ \mbox{and} \ \ \ \|A - B\| < \e.
\end{equation*}
In this case, we say that the pair $(X \times Y, Z)$ has the BPBpp for bilinear mappings with the function $\e \longmapsto \eta(\e)$. When $Z = \K$, we just say that the pair $(X \times Y, \K)$ has the {\it BPBpp for bilinear forms}.
\label{sBPBpbilinear}
\end{definition}
Now we summarize the results in this paper. In section 2, we study the BPBpp for operators. We prove first that the pair $(X, \K)$ has the BPBpp if and only if $X$ is a uniformly smooth Banach space. Also, if we assume that $X$ is a uniformly smooth Banach space and $Y$ has the property $\beta$, then the pair $(X, Y)$ has the property. When $H$ is a Hilbert space,  we show that a pair $(H, Y)$ has the BPBpp for any Banach space $Y$. Another positive result appears when we assume that $X$ is uniformly smooth and $A$ is a uniform algebra. It is proved also that in some cases the property in preserved by direct sums. We finish this section by showing that there exists a $2$-dimensional uniformly smooth real Banach space $X$ such that the pair $(X, Y)$ fails the BPBpp for some Banach space $Y$.

In section 3, we deal with the BPBpp for bilinear mappings. We prove that if $X$ is a uniformly smooth Banach space and $H$ is a Hilbert space then the pair $(X \times H, \K)$ has the BPBpp for bilinear forms if and only if the pair $(X, H^*)$ has the BPBpp for operators. Hence, the pair $(H_1 \times H_2, \K)$ has the BPBpp for bilinear forms for all Hilbert spaces $H_1$ and $H_2$. Also we prove that if $H_1$ and $H_2$ are Hilbert spaces and $Z$ has the property $\beta$, then the pair $(H_1 \times H_2, Z)$ has the property. We finish the paper by showing that the pair $(H_1 \times H_2, C(K))$ has the BPBpp for compact bilinear mappings whenever $H_1$ and $H_2$ are Hilbert spaces and $K$ is a compact Hausdorff topological space. 

\section{The Bishop-Phelps-Bollob\'as point property for operators}

In this section, we study the BPBpp for a pair of Banach spaces $(X, Y)$ on the operator version. First, we deal with the scalar valued case, when $Y = \K$. In this case the property can be rewritten as follows. The pair $(X, \K)$ has BPBpp if and only if for a given $\e > 0$, there exists $\eta(\e) > 0$ such that whenever $x_0^* \in S_{X^*}$ and $x_0 \in S_X$ satisfy $|x_0^*(x_0)| > 1 - \eta(\e)$, there exists $x_1^* \in S_{X^*}$ such that $|x_1^*(x_0)| = 1$ and $\|x_1^* - x_0^*\| < \e$.

 We give a characterization for the uniformly smooth Banach space via the BPBpp as in \cite[Theorem 2.1]{KL} which gives a characterization for uniformly convex Banach spaces via the uniform sBPBp \cite[Definition 3]{Dantas}. A Banach space $X$ is said to be {\it uniformly smooth} if the limit
\begin{equation}
\lim_{t \rightarrow 0} \frac{\|z + tx\| - 1}{t}
\label{us1}
\end{equation}
exists uniformly for all $x \in B_X$ and $z \in S_X$. We recall also that every uniformly smooth Banach space is reflexive and $X$ is uniformly smooth if and only if $X^*$ is uniformly convex. Recall that  a Banach space $X$ is said to be {\it uniformly convex} if given $\e > 0$, there exists a positive real number $\delta(\e) > 0$ such that whenever $x_1, x_2 \in S_X$ are such that $\left\| \frac{x_1 + x_2}{2} \right\| > 1 - \delta(\e)$, then $\|x_1 - x_2\| < \e$. In what follows we use the ideas from \cite[Proposition 4.10]{MMP}. By \cite[Theorem V.9.5, p. 447]{DS} we have that
\begin{equation}
\lim_{t \rightarrow 0^+} \frac{ \|z + tx\| - 1}{t} = \max \{ \re z^*(x): \|z^*\| = z^*(z) = 1 \}.
\label{us2}
\end{equation}

\begin{proposition} The Banach space $X$ is uniformly smooth if and only if the pair $(X, \K)$ has the BPBpp.
\label{DKL1}
\end{proposition}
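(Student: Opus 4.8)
\emph{Plan.} The plan is to use the quoted equivalence that $X$ is uniformly smooth if and only if $X^*$ is uniformly convex, and to prove the two implications separately. For the direct implication I would exploit the uniform convexity of $X^*$ through a single midpoint estimate, while for the converse I would read uniform smoothness off the one-sided derivative formula \eqref{us2}, feeding the functionals that attain $\|z+tx\|$ into the BPBpp.

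\emph{The direct implication.} Assume $X$ is uniformly smooth, so $X^*$ is uniformly convex with some modulus $\delta_{X^*}(\cdot)$, and set $\eta(\e)=2\,\delta_{X^*}(\e)$. Suppose $x_0\in S_X$ and $x_0^*\in S_{X^*}$ satisfy $|x_0^*(x_0)|>1-\eta(\e)$. Choosing a scalar $\lambda$ with $|\lambda|=1$ and $\lambda x_0^*(x_0)=|x_0^*(x_0)|$, and picking by Hahn--Banach a functional $q^*\in S_{X^*}$ with $q^*(x_0)=1$, I would estimate
\[
\Big\|\tfrac12(\lambda x_0^*+q^*)\Big\|\ \ge\ \re\tfrac12(\lambda x_0^*+q^*)(x_0)\ =\ \tfrac12\big(|x_0^*(x_0)|+1\big)\ >\ 1-\delta_{X^*}(\e).
\]
Uniform convexity of $X^*$ then forces $\|\lambda x_0^*-q^*\|<\e$, and $x_1^*:=\bar\lambda q^*$ satisfies $\|x_1^*\|=1$, $|x_1^*(x_0)|=1$ and $\|x_1^*-x_0^*\|=\|q^*-\lambda x_0^*\|<\e$. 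Thus $(X,\K)$ has the BPBpp; this step is routine.

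\emph{The converse.} Assume $(X,\K)$ has the BPBpp, fix $\e>0$, and let $\eta=\eta(\e)$. Write $D(z)=\{z^*\in S_{X^*}:z^*(z)=1\}$ and $\phi(z,x)=\max\{\re z^*(x):z^*\in D(z)\}$, the right-hand side of \eqref{us2}. Fix $z\in S_X$, $x\in B_X$ and $0<t<\eta/2$, and choose $z_t^*\in S_{X^*}$ with $\re z_t^*(z+tx)=\|z+tx\|$. Since $\|z+tx\|\ge 1-t$ and $\re z_t^*(x)\le 1$, one gets $\re z_t^*(z)\ge 1-2t>1-\eta$, so $z_t^*$ almost attains its norm at $z$. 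The BPBpp then produces $w_t^*\in S_{X^*}$ with $|w_t^*(z)|=1$ and $\|w_t^*-z_t^*\|<\e$, which after a unimodular rotation (of negligible cost, as $z_t^*(z)$ is real and close to $1$) I may take in $D(z)$, whence
\[
\frac{\|z+tx\|-1}{t}=\frac{\re z_t^*(z)-1}{t}+\re z_t^*(x)\ \le\ \re z_t^*(x)\ \le\ \re w_t^*(x)+\e\ \le\ \phi(z,x)+\e.
\]
Together with the convexity bound $\frac{\|z+tx\|-1}{t}\ge\phi(z,x)$ (the chord slope of the convex map $s\mapsto\|z+sx\|$ dominates its right derivative $\phi(z,x)$ at $0$), this shows the right difference quotient converges to $\phi(z,x)$ \emph{uniformly} in $(z,x)$; the same estimate applied to $-x$ handles the left quotient. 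The two-sided limit \eqref{us1} thus exists uniformly \emph{provided} $\phi(z,x)=-\phi(z,-x)$ for all $z,x$, i.e. provided every point of $S_X$ is smooth.

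\emph{The main obstacle.} The remaining point, that the BPBpp forces every $z\in S_X$ to be smooth, is the crux, and it is exactly here that the \emph{uniform} choice of $\eta(\e)$ matters rather than pointwise Bishop--Phelps--Bollob\'as behaviour; the difficulty is that the BPBpp only returns \emph{some} norm-attaining functional near a given one, so non-smoothness cannot be contradicted at the point itself. I would argue by contradiction through a boundary layer: if $u^*\ne v^*$ both lie in $D(z)$ with $\|u^*-v^*\|=\gamma>0$, I would select a tangent direction $x$ with $\re u^*(x)=0$ but $\re v^*(x)<0$ (available because $v^*$ is not a scalar multiple of $u^*$), so that at the nearby points $z_s=(z+sx)/\|z+sx\|$ the midpoint functional $m^*=\tfrac12(u^*+v^*)$ still almost attains its norm while every functional attaining its norm at $z_s$ stays at distance about $\gamma/2$ from $m^*$, contradicting the BPBpp for $\e<\gamma/2$. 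Controlling the attaining functionals at the perturbed points $z_s$ in a general Banach space is the delicate part and the place where I expect to spend most effort; once smoothness is secured, the uniform one-sided estimate above closes the argument.
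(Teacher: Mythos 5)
Your route is the same as the paper's in both directions. For the forward implication the paper simply cites \cite[Theorem 2.1]{KL} after passing to the uniformly convex dual $X^*$; your midpoint argument with the support functional $q^*$ is a correct, self-contained proof of exactly that fact, so this half is complete. For the converse, everything you write down rigorously --- the support functional $z_t^*$ of $z+tx$, the estimate $\re z_t^*(z)\ge 1-2t>1-\eta$, the application of the BPBpp, the unimodular rotation (which the paper performs silently), the comparison with $\phi(z,x)$ from (\ref{us2}), and the convexity bound $\frac{\|z+tx\|-1}{t}\ge\phi(z,x)$ --- coincides with the paper's entire proof of that direction; the paper stops at this uniform one-sided estimate and declares $X$ uniformly smooth.

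The one genuine gap is the step you flag as ``the main obstacle'': deducing that every point of $S_X$ is smooth, so that the two-sided limit (\ref{us1}) exists. You are right that the paper's two-sided definition requires this (the paper does not address it), but your proposed boundary-layer contradiction --- and any further use of the BPBpp --- is unnecessary: smoothness already follows from the uniform one-sided estimate by convexity alone. Fix $z,x\in S_X$ and set $g(s)=\|z+sx\|$, a convex function with $g(0)=1$. Your estimate, applied at the point $(z-\delta x)/g(-\delta)\in S_X$ in the direction $x$ (substitute $h=tg(-\delta)$), reads
\[
\frac{g(-\delta+h)-g(-\delta)}{h}\ \le\ g'_+(-\delta)+\e\ \le\ g'_-(0)+\e
\qquad\Bigl(0<h<\tfrac{\eta(\e)}{2}\,g(-\delta)\Bigr),
\]
the last inequality by monotonicity of one-sided derivatives of convex functions. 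Fix $0<h_0<\eta(\e)/4$ and let $\delta\to 0^+$: continuity of $g$ gives $\frac{g(h_0)-1}{h_0}\le g'_-(0)+\e$, while convexity gives $\frac{g(h_0)-1}{h_0}\ge g'_+(0)$. Hence $g'_+(0)\le g'_-(0)+\e$ for every $\e>0$, so $g'_+(0)=g'_-(0)$: every point of $S_X$ is smooth, $\phi(z,x)=-\phi(z,-x)$, and your two one-sided uniform estimates combine into the uniform two-sided limit. (For the record, your sketch can also be pushed through: any $w^*\in S_{X^*}$ with $w^*(z_s)=1$ satisfies $s\,\re w^*(x)=\re w^*(z+sx)-\re w^*(z)\ge\|z+sx\|-1\ge 0$, since $\re u^*(z+sx)=1$, whereas $\re m^*(x)=\frac12\re v^*(x)<0$; this is the lower bound on $\|w^*-m^*\|$ you wanted. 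But the convexity argument above is shorter and avoids controlling the duality map at the perturbed points.)
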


\begin{proof} Suppose that $X$ is uniformly smooth. Then $X^*$ is uniformly convex. So, given $\e >0$, $x_0^* \in B_{X^*}$ and $x_0 \in S_X$ such that $|x_0^*(x_0)| = |x_0 (x_0^*)| > 1 - \eta(\e)$, there exists $x_1^* \in S_{X^*}$ such that $|x_0(x_1^*)| = |x_1^*(x_0)| = 1$ and $\|x_1^* - x_0^*\| < \e$ \cite[Theorem 2.1]{KL}. This proves that $(X, \K)$ has the BPBpp.

 Conversely, let $\e > 0$ and consider $\eta(\e) > 0$ be the function in the definition of the BPBpp for the pair $(X, \K)$. We prove that the limit (\ref{us1}) exists uniformly for all $x \in B_X$ and $z \in S_X$. Let $x \in B_X$, $z \in S_X$ and $0 < t < \frac{\eta(\e)}{2}$. Define $x_t := \frac{z + tx}{\|z + tx\|} \in S_X$ and take $x_t^* \in S_{X^*}$ to be such that $x_t^*(x_t) = 1$. Since $x_t^*(z) = \|z + tx\| - tx_t^*(x)$, we have that $\re x_t^*(z) > \|z\| - 2t \|x\| > 1 - \eta(\e)$. From the assumption that the pair $(X, \K)$ has the property, there exists $z_t^* \in S_{X^*}$ such that $\re z_t^*(z) = 1$ and $\|z_t^* - x_t^*\| < \e$. Now by the definition of the element $x_t$ and by (\ref{us2}) we have, respectively, that
\begin{equation*}
\frac{\|z + tx\| - 1}{t} = \frac{x_t^*(z + tx) - 1}{t} \leq \re x_t^*(x) \ \ \mbox{and} \ \ \lim_{t \rightarrow 0^+} \frac{\|z + tx\| - 1}{t} \geq \re z_t^*(x).
\end{equation*}
Hence, we have
\begin{equation*}
0 \leq \frac{\|z + tx\| - 1}{t} - \lim_{t \rightarrow 0^+} \frac{\|z + tx\| - 1}{t} \leq \re x_t^*(x) - \re z_t^*(x) \leq \|x_t^* - z_t^*\| < \e.
\end{equation*}
This proves that $X$ is uniformly smooth.
\end{proof}

\begin{remark} We observe that we take $x_0^*$ on $B_{X^*}$ not on $S_{X^*}$ in the proof of Proposition \ref{DKL1} thanks to \cite[Theorem 2.1]{KL}. So we can rewrite this result like this: $X$ is a uniformly smooth Banach space if and only if given $\e > 0$, there is $\eta(\e) > 0$ such that if $x_0^* \in B_{X^*}$ and $x_0 \in S_X$ are such that $|x_0^*(x_0)| > 1 - \eta(\e)$, then there exists $x_1^* \in S_{X^*}$ satisfying $|x_1^*(x_0)| = 1$ and $\|x_1^* - x_0^*\| < \e$.
\end{remark}

As a consequence of the Proposition \ref{DKL1}, we have the following examples.

\begin{itemize}
\item[(a)] If $H$ is a Hilbert space, then the pair $(H, \K)$ has the BPBpp.
\item[(b)] The pair $(L_p(\mu), \K)$ has the BPBpp for a $\sigma$-finite measure $\mu$ and $1 < p < \infty$.
\end{itemize}

Now we start to treat the vector valued case. 

\begin{proposition} Let $X$ be a Banach space. Suppose that there is some Banach space $Y$ such that the pair $(X, Y)$ has the BPBpp. Then $X$ is uniformly smooth.	
\label{DKL2}
\end{proposition}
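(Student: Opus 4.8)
The plan is to show that the scalar pair $(X,\K)$ inherits the BPBpp from $(X,Y)$ and then invoke Proposition \ref{DKL1}, which identifies the BPBpp of $(X,\K)$ with uniform smoothness of $X$. Fix a norming pair in $Y$ once and for all: choose $y_0 \in S_Y$ and $y_0^* \in S_{Y^*}$ with $y_0^*(y_0) = 1$. Given scalar data $x_0^* \in S_{X^*}$ and $x_0 \in S_X$ with $|x_0^*(x_0)|$ close to $1$, I would lift $x_0^*$ to the rank-one operator $T := x_0^* \otimes y_0 \in \mathcal{L}(X,Y)$ defined by $T(x) = x_0^*(x)\,y_0$. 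Since $\|y_0\| = 1$ one has $\|T\| = \|x_0^*\| = 1$ and $\|T(x_0)\| = |x_0^*(x_0)|$, so the almost-norm-attainment of $x_0^*$ at $x_0$ transfers verbatim to $T$ at $x_0$. After replacing $x_0^*$ by a unimodular multiple we may also assume $x_0^*(x_0) = |x_0^*(x_0)| =: a$ is real and positive, which only multiplies the final functional by the inverse phase.

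Next I would apply the BPBpp of $(X,Y)$ to $T$ at $x_0$: for a parameter $\e$ to be fixed later, it produces $S \in \mathcal{L}(X,Y)$ with $\|S\| = 1$, $\|S(x_0)\| = 1$ and $\|S - T\| < \e$. To come back to a functional, I would pick $y_1^* \in S_{Y^*}$ with $y_1^*(S(x_0)) = \|S(x_0)\| = 1$ and set $x_1^* := y_1^* \circ S \in X^*$. Then $\|x_1^*\| \le \|y_1^*\|\,\|S\| = 1$, while $x_1^*(x_0) = y_1^*(S(x_0)) = 1$ forces $\|x_1^*\| = 1$ and $|x_1^*(x_0)| = 1$; so $x_1^*$ is a unit functional attaining its modulus at the original point $x_0$, exactly as the scalar BPBpp requires.

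It remains to estimate $\|x_1^* - x_0^*\|$. For $\|x\| \le 1$ I would split
\[
x_1^*(x) - x_0^*(x) = y_1^*\big((S-T)(x)\big) + x_0^*(x)\big(y_1^*(y_0) - 1\big),
\]
using $T(x) = x_0^*(x)\,y_0$ in the second term. The first summand is bounded by $\|S - T\| < \e$. The genuine point of the argument, and the step I expect to be the main obstacle, is controlling $|y_1^*(y_0) - 1|$, i.e. showing that the recovered direction $y_1^*$ is forced to be close to $y_0^*$ on $y_0$. Here the equality $\|S(x_0)\| = 1$ \emph{at the same point} $x_0$ is essential: evaluating at $x_0$ gives $1 = y_1^*(S(x_0)) = a\, y_1^*(y_0) + y_1^*\big((S-T)(x_0)\big)$, whence $y_1^*(y_0) = (1 - r)/a$ with $|r| < \e$ and $a > 1 - \eta$, so that $|y_1^*(y_0) - 1| < (\eta + \e)/(1 - \eta)$. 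Combining the two estimates yields $\|x_1^* - x_0^*\| < \e + (\eta + \e)/(1-\eta)$. Finally, since one may assume the BPBpp function of $(X,Y)$ satisfies $\eta = \eta(\e) \le \e$ (shrinking $\eta$ only strengthens the hypothesis), this bound tends to $0$ with $\e$; choosing $\e$ small enough as a function of the target accuracy furnishes the BPBpp function for $(X,\K)$, and Proposition \ref{DKL1} then gives that $X$ is uniformly smooth.
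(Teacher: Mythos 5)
Your proof is correct and takes essentially the same route as the paper's: lift $x_0^*$ to the rank-one operator $T = x_0^*\otimes y_0$, apply the BPBpp of $(X,Y)$ to get $S$, pull back through a norming functional $y_1^*$ of $S(x_0)$ to form $x_1^* = S^*y_1^* \in S_{X^*}$ attaining its norm at $x_0$, and conclude via Proposition~\ref{DKL1}. The only divergence is bookkeeping in the final estimate: the paper bounds $\|x_1^* - y_0^*(y_0)x_0^*\| \leq \|S^*-T^*\|$ and then controls $|1 - y_0^*(y_0)|$ using $\re x_1^*(x_0)=1$, whereas you decompose $x_1^* - x_0^*$ directly and solve for $y_1^*(y_0)$ exactly by evaluating $y_1^*(S(x_0))=1$ at $x_0$ --- the same key observation (norm attainment at the \emph{same} point forces $y_1^*(y_0)\approx 1$) in slightly different form.
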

\begin{proof} Assume that  a pair $(X, Y)$ has BPBpp and for $\e > 0$,  let $\eta(\e) > 0$  be the function in the definition of the BPBpp. We only need to show that the pair $(X, \K)$ has the property because of Proposition \ref{DKL1}. Let $x_0^* \in S_{X^*}$ and $x_0 \in S_X$ be such that
\begin{equation*}	
\re x_0^*(x_0) > 1 - \eta \left( \frac{\e}{2} \right).
\end{equation*}
Define $T: X \longrightarrow Y$ by $T(x) := x_0^*(x)y_0$ for any fixed $y_0 \in S_{Y}$. Since $\|T\| = \|x_0^*\| = 1$ and $\|T(x_0)\| > 1 - \eta(\frac{\e}{2})$, there exists $S \in \mathcal{L}(X, Y)$ with $\|S\| = 1$ such that $\|S(x_0)\| = 1$ and $\|S - T\| < \frac{\e}{2}$. Take $y_0^* \in S_{Y^*}$ so that $\re y_0^* (S(x_0)) = |y_0^*(S(x_0))| = \|S(x_0)\| = 1$ and define $x_1^* := S^*y_0^* \in X^*$. Then we see that  
\begin{equation*}
1\geq \|x_1^*\|\geq \re x_1^*(x_0) = \re S^*y_0^*(x_0) = \re y_0^*(S(x_0)) = 1. 
\end{equation*}
Hence $x_1^* \in S_{X^*}$ and it attains its norm at $x_0$. It remains to prove that $\|x_1^* - x_0^*\| < \e$. By using that $\|S - T\| < \frac{\e}{2}$, we get that
\begin{equation*}
\|x_1^* - y_0^*(y_0)x_0^*\| = \|x_1^* - T^*y_0^*\| = \|S^* y_0^* - T^* y_0^*\| \leq \|S^* - T^*\| < \frac{\e}{2}
\end{equation*}
and since $\re x_1^*(x_0) = 1$,
\begin{equation*}
\re (1 - y_0^*(y_0)) \leq \re (x_1^*(x_0) - y_0^*(y_0)x_0^*(x_0)) \leq \|x_1^* - y_0^*(y_0)x_0^*\| < \frac{\e}{2}.
\end{equation*}
This both inequalities imply that
\begin{equation*}
\|x_1^* - x_0^*\| \leq \|x_1^* - y_0^*(y_0)x_0^*\| + \|y_0^*(y_0)x_0^* - x_0^*\| < \e.
\end{equation*}
This proves that the pair $(X, \K)$ has the BPBpp as desired.
\end{proof}

In particular, all the pairs $(X, Y)$ whenever $X$ is not uniformly smooth, for example $X = c_0$ or $X = \ell_1$, do not have the BPBpp for any Banach space $Y$. Because of that from now on we have to assume that the domain space $X$ is uniformly smooth in order to get more pairs satisfying the property.

In the next result, we prove that for such $X$ whenever $Y$ has the property $\beta$, the pair $(X, Y)$ satisfies the BPBpp. To do so, we use similar arguments to \cite[Theorem 2.2]{AAGM} and \cite[Theorem 4.1]{Martin}. Recall that a Banach space $Y$ is said to  have the {\it property $\beta$ with constant $0 \leq \rho < 1$} if there are sets $\{y_i: i \in I\} \subset S_Y$ and $\{ y_i^*: i \in I \} \subset S_{Y^*}$ such that
\begin{itemize}
\item[(i)] $y_i^*(y_i) = 1$ for all $i \in I$,
\item[(ii)] $|y_i^*(y_j)| \leq \rho < 1$ for all $i, j \in I$ with $i \not= j$,
\item[(iii)] $\|y\| = \sup_{i \in I} |y_i^*(y)|$ for all $y \in Y$.
\end{itemize}
Notice that  $c_0 (I)$ and $\ell_{\infty} (I)$ are the most typical examples with property $\beta$. By \cite[Theorem 2.2]{AAGM} we have that if $Y$ satisfies the property $\beta$ then the pair $(X, Y)$ has the BPBp for any Banach space $X$. We have the analogous result for the BPBpp.

\begin{proposition} Let $X$ and $Y$ be Banach spaces. Assume that $X$ is uniformly smooth and that $Y$ has the property $\beta$. Then the pair $(X, Y)$ has the BPBpp.	
\label{DKL3}
\end{proposition}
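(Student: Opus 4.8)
The plan is to reduce to the scalar case handled in Proposition \ref{DKL1} and then transport the approximating functional back to an operator using the property $\beta$ data, contracting $T$ slightly so as to leave room for the perturbation. Let $\{y_i : i \in I\} \subset S_Y$ and $\{y_i^* : i \in I\} \subset S_{Y^*}$ witness the property $\beta$ of $Y$ with constant $\rho < 1$. Given $\e > 0$, I would first fix auxiliary parameters $\e' > 0$ and $\delta > 0$ (to be specified at the end so that everything fits) and let $\eta_0$ denote the BPBpp function of the pair $(X, \K)$, which exists because $X$ is uniformly smooth by Proposition \ref{DKL1}, together with the preceding Remark that allows the datum $x_0^* \in B_{X^*}$. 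I would then set $\eta(\e) := \eta_0(\e')$.

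Now suppose $T \in \mathcal{L}(X, Y)$ with $\|T\| = 1$ and $x \in S_X$ satisfy $\|T(x)\| > 1 - \eta(\e)$. Since property $\beta$ gives $\|T(x)\| = \sup_{i} |y_i^*(T(x))|$, there is an index $i_0$ with $|y_{i_0}^*(T(x))| > 1 - \eta(\e)$. The functional $x^* := T^* y_{i_0}^*$ lies in $B_{X^*}$ and satisfies $|x^*(x)| = |y_{i_0}^*(T(x))| > 1 - \eta_0(\e')$, so by the scalar BPBpp for $X$ there is $x_1^* \in S_{X^*}$ with $|x_1^*(x)| = 1$ and $\|x_1^* - x^*\| < \e'$.

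The operator I would build is
\begin{equation*}
S(z) := (1 - \delta)\, T(z) + \big[\, x_1^*(z) - (1 - \delta) x^*(z) \,\big]\, y_{i_0}.
\end{equation*}
A direct computation using $y_{i_0}^*(y_{i_0}) = 1$ shows $y_{i_0}^*(S(z)) = x_1^*(z)$, whence $\|S(x)\| \ge |y_{i_0}^*(S(x))| = |x_1^*(x)| = 1$, so $S$ attains at the prescribed point $x$ with value at least $1$. The key remaining point — and the main obstacle — is to force $\|S\| \le 1$ exactly, rather than merely $\|S\| \le 1 + \rho\e'$ as a naive estimate gives; this is precisely what makes the \emph{point} property harder than the ordinary BPBp, since here I am not allowed to move $x$ to repair the norm. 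To achieve it I would evaluate $y_i^*(S(z))$ for $z \in S_X$ and $i \ne i_0$, bound it by $(1 - \delta) + \rho\big(\e' + \delta\big)$ using $|y_i^*(y_{i_0})| \le \rho$, $\|T\| = 1$ and $\|x_1^* - x^*\| < \e'$, and observe that choosing $\delta$ so that $\rho\e' \le \delta(1 - \rho)$ makes this at most $1$; for $i = i_0$ one has $|y_{i_0}^*(S(z))| = |x_1^*(z)| \le 1$ automatically. Thus the contraction factor $(1 - \delta)$ on $T$ creates exactly the slack needed to absorb the perturbation, so that $\|S\| = \|S(x)\| = 1$.

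Finally I would estimate $\|S - T\| \le \delta + (\e' + \delta) = \e' + 2\delta$ straight from the definition of $S$, and then fix the parameters at the outset by taking $\delta = \rho\e'/(1 - \rho)$ and $\e'$ small enough that $\e' + 2\delta = \e'(1 + \rho)/(1 - \rho) < \e$. With these choices $\eta(\e) = \eta_0(\e')$ is a valid modulus and $S$ witnesses the BPBpp for $(X, Y)$. I expect the only genuinely delicate step to be the norm estimate for the indices $i \ne i_0$ and the bookkeeping that ties $\delta$ to $\rho$ and $\e'$; everything else is a routine transcription of the scalar result through $T^*$.
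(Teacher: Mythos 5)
Your proof is correct, and it shares the paper's skeleton: condition (iii) of property $\beta$ produces an index $i_0$ with $|y_{i_0}^*(T(x))|>1-\eta$, Proposition \ref{DKL1} (in the form of the Remark following it, which allows the datum $T^*y_{i_0}^*\in B_{X^*}$ rather than $S_{X^*}$) produces $x_1^*\in S_{X^*}$ with $|x_1^*(x)|=1$ close to $T^*y_{i_0}^*$, and one then perturbs $T$ by a rank-one term in the direction $y_{i_0}$, using condition (ii) to control the indices $i\ne i_0$. Where you genuinely differ is in how the norm-one constraint is enforced. The paper overshoots: it keeps $T$ intact and sets $S:=T+\left[\left(1+\frac{\e}{4}\right)x_1^*-T^*y_{\alpha_0}^*\right]y_{\alpha_0}$, so that $\|S^*y_{\alpha_0}^*\|=1+\frac{\e}{4}$ strictly dominates $\|S^*y_{\alpha}^*\|$ for every $\alpha\ne\alpha_0$ (this is exactly what the choice (\ref{propbeta}) of $\xi$ buys), whence $\|S\|=\|S(x_0)\|=1+\frac{\e}{4}$; it must then rescale to $U=S/\|S\|$, paying roughly a factor $2$ in the final distance estimate. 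You undershoot instead: contracting $T$ to $(1-\delta)T$ with $\delta=\rho\e'/(1-\rho)$ creates precisely the slack needed so that $\sup_{i\ne i_0}\|S^*y_i^*\|\le(1-\delta)+\rho(\e'+\delta)\le 1$, while the identity $y_{i_0}^*\circ S=x_1^*$ forces $\|S(x)\|=1$; no renormalization is needed and $\|S-T\|\le\e'+2\delta$ is immediate. Both devices are standard in property-$\beta$ arguments going back to \cite{AAGM}; yours ends more cleanly and keeps the norm exactly one throughout, while the paper's keeps the full operator $T$ unperturbed apart from the rank-one correction. One bookkeeping remark: your estimate for $i\ne i_0$ tacitly assumes $\delta\le 1$ (otherwise the first term should read $|1-\delta|$); since $2\delta<\e$ under your constraints this is automatic for $\e\le 2$, and in general it is just one more ``$\e'$ small enough'' requirement (or the usual harmless reduction to $\e<2$), so it is not a gap.
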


\begin{proof} Let $\e > 0$ be given. Suppose that $X$ is uniformly smooth. Proposition \ref{DKL1} says that there exists a positive real number $\eta (\e) > 0$ such that whenever $x_0^* \in B_{X^*}$ and $x_0 \in S_X$ satisfy $|x_0^*(x_0)| > 1 - \eta (\e)$, there is $x_1^* \in S_{X^*}$ such that $|x_1^*(x_0)| = 1$ and  $\|x_1^* - x_0^*\| < \e$. Choose $\xi > 0$ such that
\begin{equation}
1 + \rho\left(\frac{\e}{4} + \xi \right) < \left(1 + \frac{\e}{4} \right)(1 - \xi).
\label{propbeta}
\end{equation}
This gives that $\xi < \frac{\e}{4}$. Let $T \in \mathcal{L}(X, Y)$ with $\|T\| = 1$ and $x_0 \in S_X$ be such that $\|T(x_0)\| > 1 - \eta (\xi)$. Since $Y$ has the property $\beta$, there exists some $\alpha_0 \in \Lambda$ such that $y_{\alpha_0}^*(T(x_0)) = (T^*y_{\alpha_0}^*)(x_0) > 1 - \eta (\xi)$. So there is $x_1^* \in S_{X^*}$ such that $|x_1^*(x_0)| = 1$ and $\|x_1^* - T^*y_{\alpha_0}^*\| < \xi$. Define $S: X \longrightarrow Y$ by
\begin{equation*}
S(x) := T(x) + \left[ \left( 1 + \frac{\e}{4} \right) x_1^*(x) - T^*y_{\alpha_0}^*(x) \right] y_{\alpha_0} \ \ (x \in X).
\end{equation*}
Then $\|S - T\| < \frac{\e}{4} + \xi < \frac{\e}{2}$. Also, we have
\begin{equation*}
S^*y^* = T^*y^* + y^*(y_{\alpha_0}) \left[ \left( 1 + \frac{\e}{4} \right) x_1^* - T^* y_{\alpha_0}^* \right] \ \ \left(y^* \in Y^* \right).
\end{equation*} 
Note that $\|S\| = \sup_{\alpha \in \Lambda} \|S^* y^*\|$. On the one hand, we have that $\|S^* y_{\alpha_0}^*\| = 1 + \frac{\e}{4}$ and on another hand, for $\alpha \not= \alpha_0$, we have that
\begin{equation*}
\|S^* y_{\alpha}^*\| \leq 1 + \rho \left( \frac{\e}{4} + \xi \right) < \left(1 + \frac{\e}{4} \right)(1 - \xi) < 1 + \frac{\e}{4}.
\end{equation*}
This shows that $S^*$ attains its norm at $y_{\alpha_0}^* \in S_{Y^*}$ and consequently $\|S(x_0)\| = \|S\|$. So if $U := \frac{S}{\|S\|}$, then $\|U(x_0)\| = 1$ and $\|U - T\| < 2 \|S - T\| < \e$. Thus the pair $(X, Y)$ has the BPBpp.
\end{proof}

As a consequence of the Proposition \ref{DKL3}, we have the following examples.
\begin{itemize}
\item[(a)] If $H$ is a Hilbert space, then the pairs $(H, c_0)$ and $(H, \ell_{\infty})$ has the BPBpp.
\item[(b)] The pairs $(L_p(\mu), c_0)$ and $(L_p(\mu), \ell_{\infty})$ have the BPBpp for a $\sigma$-finite measure $\mu$ and $1 < p < \infty$.
\end{itemize}
When the domain is a Hilbert space we get a stronger result as it is showed in the next result.

\begin{theorem} Let $H$ be a Hilbert space and let $Y$ be any Banach space. Then the pair $(H, Y)$ has the BPBpp.
\label{DKL4}
\end{theorem}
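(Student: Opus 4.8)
The plan is to \emph{upgrade the ordinary Bishop-Phelps-Bollob\'as property to the point property by exploiting the homogeneity of the sphere of a Hilbert space under rotations}. Since a Hilbert space $H$ is uniformly convex, it is known that the pair $(H, Y)$ has the (ordinary) BPBp for every Banach space $Y$; this is the input I would use. Concretely, for a given $\e > 0$ I would set $\eta(\e) := \eta_{0}(\e/2)$, where $\eta_{0}$ is a BPBp-function of the pair $(H,Y)$. Then, starting from $T \in \mathcal{L}(H,Y)$ with $\|T\| = 1$ and $x \in S_H$ satisfying $\|T(x)\| > 1 - \eta(\e)$, the BPBp furnishes an operator $S \in \mathcal{L}(H,Y)$ with $\|S\| = 1$ and a point $x_1 \in S_H$ such that $\|S(x_1)\| = 1$, $\|x_1 - x\| < \e/2$ and $\|S - T\| < \e/2$. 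The operator $S$ attains its norm, but at the \emph{wrong} point $x_1$; the remaining task is to transport this norm attainment from $x_1$ back to the prescribed point $x$.

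This is exactly where the Hilbert structure enters. I would choose a surjective linear isometry (a rotation) $R$ of $H$ with $R(x) = x_1$, taken to be the identity on $\{x, x_1\}^{\perp}$ and the minimal rotation in the two-dimensional subspace $\mathrm{span}\{x, x_1\}$. For unit vectors this gives the sharp estimate $\|R - I\| = \|x - x_1\|$ in the real case (via the elementary identity $\|x - x_1\| = 2\sin(\theta/2)$ for the rotation angle $\theta$), and $\|R - I\| \le C\|x - x_1\|$ in general; in either case $\|R - I\| \to 0$ as $\|x-x_1\| \to 0$. Setting $S' := S \circ R$, precomposition with a surjective isometry preserves the operator norm, so $\|S'\| = \|S\| = 1$; moreover $\|S'(x)\| = \|S(R x)\| = \|S(x_1)\| = 1$, so that $S'$ attains its norm \emph{exactly at} $x$. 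Finally $\|S' - T\| \le \|S \circ (R - I)\| + \|S - T\| \le \|R - I\| + \e/2$, which is below $\e$ once $\eta$ is chosen small enough that $\|R - I\| < \e/2$ (absorbing the constant $C$ into the choice of $\eta$). This is precisely the conclusion of the BPBpp.

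The step I expect to be the crux is not any single inequality but the structural observation itself: that the point property follows from the ordinary property by a single rotation, and that this is available precisely because the unitary group acts transitively on $S_H$ through isometries close to the identity whenever the two points are close. I would stress this because it isolates the exact feature of Hilbert spaces being used, and it is consistent with the fact, established later in the paper, that a two-dimensional uniformly smooth space -- whose sphere lacks such an abundance of near-identity isometries -- may fail the BPBpp for a suitable $Y$, even though its scalar case is fine by Proposition~\ref{DKL1}. The only genuinely computational points to check are the bound on $\|R - I\|$ and the bookkeeping of the two factors of $\e/2$, both of which are routine once $R$ is set up. If one prefers a self-contained argument avoiding the external BPBp input, the same scheme works after first proving the near-point version directly from the uniform convexity of $H$, and then applying the rotation exactly as above.
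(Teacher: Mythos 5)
Your proposal is correct and is essentially the paper's own proof: both arguments invoke the BPBp for $(H,Y)$ (from uniform convexity of $H$) and then transport norm attainment to the prescribed point by precomposing with a linear isometry $R$ close to the identity carrying that point to the BPBp point. The only difference is that you make explicit the construction of $R$ (rotation in $\mathrm{span}\{x,x_1\}$, identity on the orthogonal complement, with the bound $\|R-I\|\le C\|x-x_1\|$), which the paper simply asserts exists.
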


\begin{proof} Let $H$ be a Hilbert space and let $\e > 0$ be given. Since $H$ is uniformly convex, the pair $(H, Y)$ has the BPBp for all Banach spaces $Y$ (see \cite[Theorem 3.1]{KL} or \cite[Corollary 2.3]{ABGM}). Hence, there exists some function $\e \longmapsto \eta(\e)$ satisfying the BPBp for this pair. Let $T \in \mathcal{L}(H, Y)$ with $\|T\| = 1$ and $h_0 \in S_H$ be such that
\begin{equation*}
\|T(h_0)\| > 1 - \eta\left( \frac{\e}{2} \right).
\end{equation*}
Then there are  $\widetilde{S} \in \mathcal{L}(H, Y)$ with $\|\widetilde{S}\| = 1$ and $\widetilde{h}_0 \in S_H$ satisfying that
\begin{equation*}
\|\widetilde{S}(\widetilde{h}_0)\| = 1, \ \ \ \|\widetilde{S} - T\| < \frac{\e}{2} \ \ \ \mbox{and} \ \ \ \|h_0 - \widetilde{h}_0\| < \frac{\e}{2}.
\end{equation*}
Since $H$ is Hilbert, there is a linear isometry $R: H \longrightarrow H$ with $\|R\| = 1$ such that
\begin{equation*}
R(h_0) = \widetilde{h}_0 \ \ \ \mbox{and} \ \ \ \|R - Id_H\| < \frac{\e}{2}.
\end{equation*}
Define $S := \widetilde{S} \circ R: H \longrightarrow Y$. Then $\|S\| \leq 1$ and
\begin{equation*}
\|S(h_0)\| = \| \widetilde{S}(R(h_0))\| = \| \widetilde{S} (\widetilde{h}_0)\| = 1.
\end{equation*}
So $\|S\| = \|S(h_0)\| = 1$. Moreover,
\begin{equation*}
\|S - T\| \leq \| \widetilde{S} \circ R - \widetilde{S}\| + \| \widetilde{S} - T\| \leq \| R - Id_H\| + \frac{\e}{2} < \e.
\end{equation*}
This proves that the pair $(H, Y)$ has the BPBpp as desired.
\end{proof}


Let $K$ be a compact Hausdorff topological space. We denote by $C(K)$ the space of all continuous functions defined on $K$ and $\| \ . \ \|_{\infty}$ denotes the supremum norm on this space. A {\it uniform algebra} is a $\| \ . \ \|_{\infty}$-closed subalgebra $A \subset C(K)$ endowed with the supremum norm that separates the points of $K$. It is known that the pair $(X, C (K))$ has the BPBp whenever $X$ is an Asplund space \cite[Corollary 2.6]{ACK} and it was extended for the pair $(X, A)$ when $A$ is a uniform algebra \cite[Theorem 3.6]{CGK}. We use the ideas from those results to prove that the pair $(X, A)$ has the BPBpp whenever $X$ is a uniformly smooth Banach space and $A$ is a uniform algebra. 

\begin{theorem} Let $X$ be a uniformly smooth Banach space and $A$ be a uniform algebra. The pair $(X, A)$ has the BPBpp.
\label{DKL8}
\end{theorem}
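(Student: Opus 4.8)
The plan is to transplant the property~$\beta$ construction of Proposition~\ref{DKL3} to the uniform algebra setting: the finite "coordinate system'' $\{(y_i,y_i^*)\}$ will be replaced by the evaluation functionals at strong peak points of $A$ together with peaking functions, and the fact that the domain point is not allowed to move will be absorbed by the scalar point property coming from uniform smoothness (Proposition~\ref{DKL1}). Throughout, write $A\subset C(K)$.

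First I would fix the data. Let $T\in\mathcal L(X,A)$ with $\|T\|=1$ and $x_0\in S_X$ satisfy $\|T(x_0)\|>1-\eta(\e)$ for a value $\eta$ to be chosen. The element $T(x_0)$ is a single continuous function of sup norm $>1-\eta$; since the Choquet boundary $\mathrm{Ch}(A)$ (the set of strong peak points) is a norming boundary for a uniform algebra, I may pick $t_0\in\mathrm{Ch}(A)$ with $|T(x_0)(t_0)|>1-\eta$ and, after a unimodular rotation, assume $\re T(x_0)(t_0)$ is close to $1$. Consider the evaluation functional $x_0^*:=T^*\delta_{t_0}=\delta_{t_0}\circ T$; since $A$ contains the constants, $\|\delta_{t_0}\|_{A^*}=1$, so $x_0^*\in B_{X^*}$ and $|x_0^*(x_0)|>1-\eta$. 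Uniform smoothness of $X$ now lets me apply the Remark following Proposition~\ref{DKL1} to $x_0^*\in B_{X^*}$, obtaining $x_1^*\in S_{X^*}$ with $|x_1^*(x_0)|=1$ and $\|x_1^*-x_0^*\|<\xi$, where $\xi$ is a small parameter governed by $\e$; multiplying by a unimodular scalar I arrange $x_1^*(x_0)=1$.

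Next I would invoke the uniform-algebra peaking lemma underlying \cite[Theorem 3.6]{CGK} (the Urysohn-type lemma for uniform algebras): for any neighbourhood $U$ of $t_0$ and any $\gamma>0$ there is a peak function $f\in A$ with $f(t_0)=\|f\|_\infty=1$, $|f(t)|<\gamma$ for $t\notin U$, and the usual real-part control near $t_0$. In analogy with the operator of Proposition~\ref{DKL3} (with $f$ in the role of $y_{\alpha_0}$ and $\delta_{t_0}$ in the role of $y_{\alpha_0}^*$) I set
\[
S(x):=T(x)+\Bigl[\bigl(1+\tfrac{\e}{4}\bigr)x_1^*(x)-x_0^*(x)\Bigr]f\qquad(x\in X).
\]
Two facts are then immediate. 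Since $\bigl\|(1+\tfrac\e4)x_1^*-x_0^*\bigr\|\le\tfrac\e4+\|x_1^*-x_0^*\|<\tfrac\e4+\xi$ and $\|f\|_\infty=1$, we get $\|S-T\|<\tfrac\e4+\xi$. Moreover, evaluating at $t_0$ and using $f(t_0)=1$ and $x_0^*=T^*\delta_{t_0}$ gives $S^*\delta_{t_0}=(1+\tfrac\e4)x_1^*$, a functional of norm exactly $1+\tfrac\e4$ which attains it at $x_0$; in particular $\|S(x_0)\|\ge 1+\tfrac\e4$.

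The heart of the matter, and the step I expect to be the main obstacle, is to upgrade this to \emph{exact} norm attainment at the fixed point $x_0$, i.e.\ to show that $\|S\|=\sup_{t\in\mathrm{Ch}(A)}\|S^*\delta_t\|$ is reached at $t_0$. Writing $c:=(1+\tfrac\e4)x_1^*-x_0^*$ one has $S^*\delta_t=T^*\delta_t+f(t)c$, whence $\|S^*\delta_t\|\le 1+|f(t)|(\tfrac\e4+\xi)$. For $t$ outside a small neighbourhood of $t_0$ the concentration of $f$ makes this strictly below $1+\tfrac\e4=\|S^*\delta_{t_0}\|$, so only the points $t$ near $t_0$, where $|f(t)|\approx1$, are dangerous; there the estimate degrades to $\|S^*\delta_t\|\le 1+\tfrac\e4+\xi$. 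Because $\{T(x):x\in B_X\}$ need not be equicontinuous at $t_0$, one cannot simply discard this excess — precisely the difficulty that is absent in the property~$\beta$ case, where the uniform separation $|y_i^*(y_j)|\le\rho<1$ forces the boosted coordinate to dominate strictly, whereas here a whole continuum of evaluations clusters at $t_0$. I would close this gap by calibrating the neighbourhood $U$ and the concentration of $f$ quantitatively against $\xi$ (in the spirit of \cite[Corollary 2.6]{ACK} and \cite[Theorem 3.6]{CGK}); should a single perturbation leave a residual mismatch of order $\xi$ between $\|S\|$ and $\|S(x_0)\|$, I would run the construction as a convergent successive-approximation scheme, feeding each output back as a new $T$ with a geometrically decreasing deficit, so that the total perturbation is summable and the limit operator $U$ satisfies $\|U\|=\|U(x_0)\|$ exactly. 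Dividing by $\|U\|$ yields a norm-one operator attaining its norm at $x_0$, and a final bookkeeping of the parameters (choosing $\xi$ and $\gamma$ small enough in terms of $\e$) gives $\|U-T\|<\e$, establishing the BPBpp for the pair $(X,A)$.
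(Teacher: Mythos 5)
Your opening move (replace the Bishop--Phelps--Bollob\'as theorem by Proposition \ref{DKL1} inside the uniform-algebra machinery of \cite{CGK}, so that the point $x_0$ never moves) is exactly the paper's intent, but the execution has a genuine gap, and it is the one you flagged yourself. The ingredient you never invoke is the \emph{Asplund} property of $T$, which is the entire point of the paper's short proof: since $X$ is uniformly smooth it is reflexive, hence every $T\in\mathcal{L}(X,A)$ is an Asplund operator, and therefore the map $t\longmapsto T^*\delta_t$ is norm-fragmented. Fragmentability produces an open set $U\subset K$, meeting the set of points where $|T(x_0)(t)|$ is nearly $1$, on which the \emph{oscillation} of $t\mapsto T^*\delta_t$ is smaller than $\xi$; only \emph{afterwards} does the Urysohn-type lemma of \cite{CGK} supply a point $t_0\in U\cap\Gamma_0$ and a peak function $f\in A$ with $f(t_0)=\|f\|_\infty=1$, $|f|<\xi$ off $U$, and $f(K)$ contained in a Stolz region where $|f(t)|+(1-\xi)|1-f(t)|\leq 1$. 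Note the order of quantifiers: you fix $t_0$ first and then hope for a good neighbourhood of it, which amounts to asking for equicontinuity at $t_0$ --- exactly what you admit fails. Fragmentability only gives \emph{some} good open set, inside which $t_0$ must then be chosen; without it there is no mechanism at all to control $T^*\delta_t$ for $t$ near $t_0$, and also no way to bound $\|S-T\|$ in the correct construction below.

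The second problem is that neither of your proposed repairs can close the hole in your additive, property-$\beta$-style perturbation. Calibrating $U$ and the concentration of $f$ is irrelevant, because the excess $\xi$ occurs at points \emph{inside} $U$ where $|f|\approx 1$; even granting the oscillation control, the additive formula still yields only $\|S^*\delta_t\|\leq 1+\frac{\e}{4}+O(\xi)$ there. The cure is to change the formula to the convex-combination form of \cite{CGK},
\[
S(x):=\left(1+\e\right)x_1^*(x)\,f+\left(1-f\right)T(x),
\]
which lies in $A$ because $A$ is an algebra; the Stolz-region inequality then gives $\|S^*\delta_t\|\leq(1+\e)|f(t)|+|1-f(t)|\leq 1+\e$ for \emph{every} $t$, so that $\|S\|=\|S(x_0)\|=1+\e$ exactly, while the oscillation control on $U$ (and the smallness of $f$ off $U$) bounds $\|S-T\|$. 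Your fallback iteration fails quantitatively: after one step the deficit $\|S\|-\|S(x_0)\|$ is of order $\xi$, and to run the construction again you must have this deficit below $\eta(\xi')$ for the next parameter $\xi'$; since $\eta(t)$ is at best of order $t^2$, the parameters are forced to satisfy $\xi'\geq\eta^{-1}(\xi)\gg\xi$, so the deficits \emph{increase} toward a constant rather than decrease geometrically, and the total perturbation is not summable. This degradation of moduli is precisely why Bishop--Phelps--Bollob\'as-type statements cannot be obtained by naive successive approximation, and why the single-shot construction with the Asplund and Stolz ingredients is needed.
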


\begin{proof} Indeed, adapt \cite[Lemma 3.5]{CGK} by using Proposition \ref{DKL1} instead of the Bishop-Phelps-Bollob\'as theorem. Then apply it in \cite[Theorem 3.6]{CGK}. Since every uniformly smooth space is reflexive and every operator from a reflexive space into $A$ is Asplund, the result follows.
\end{proof}

We have the follow consequence.

\begin{corollary} Let $X$ be a uniformly smooth Banach space and let $K$ be a compact Hausdorff topological space. Then the pair $(X, C(K))$ has the BPBpp.
\label{DKL9}
\end{corollary}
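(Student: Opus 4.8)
The plan is to recognize that $C(K)$ is itself a uniform algebra and then invoke Theorem \ref{DKL8} directly. Recall that the paper defines a uniform algebra to be a $\|\cdot\|_\infty$-closed subalgebra $A \subset C(K)$, endowed with the supremum norm, that separates the points of $K$. So the whole content of this corollary is the observation that $A = C(K)$ itself meets this definition, after which nothing further needs to be proved.

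Concretely, I would verify each requirement for $A = C(K)$. Trivially $C(K)$ is a subalgebra of itself carrying the supremum norm, and it is closed in its own norm since it is complete. The only clause that warrants an argument is point separation: because $K$ is compact Hausdorff, it is normal, so by Urysohn's lemma, given distinct $s, t \in K$ there is $f \in C(K)$ with $f(s) \neq f(t)$. Hence $C(K)$ separates the points of $K$, and therefore $C(K)$ is a uniform algebra in the sense of the paper.

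With this identification in hand, I would apply Theorem \ref{DKL8} to the pair $(X, C(K))$, using the standing hypothesis that $X$ is uniformly smooth together with the fact just established that $C(K)$ is a uniform algebra. The conclusion of that theorem is precisely that $(X, C(K))$ has the BPBpp, which is the desired statement. The main obstacle is essentially nonexistent at the level of this corollary, since all the substantive work (the adaptation of \cite[Lemma 3.5]{CGK} via Proposition \ref{DKL1} and the reflexivity-plus-Asplund argument) is already carried out in Theorem \ref{DKL8}; the only point to be careful about is matching the paper's precise definition of uniform algebra, and in particular confirming point separation through Urysohn's lemma, which is standard.
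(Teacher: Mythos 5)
Your proposal is correct and follows exactly the route the paper intends: Corollary \ref{DKL9} is stated as an immediate consequence of Theorem \ref{DKL8}, the only observation needed being that $C(K)$ itself satisfies the paper's definition of a uniform algebra (which your Urysohn's lemma argument for point separation makes explicit). Nothing is missing.
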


Next we study the property on direct sums.


\begin{proposition} Let $X$ be a uniformly smooth Banach space and let $\{Y_j: \ j \in J \}$ be an arbitrary family of Banach spaces.
\begin{itemize}
\item[(a)] The pairs $\left( X, \left( \bigoplus_{j \in J} Y_j \right)_{\ell_{\infty}} \right)$ and $\left( X, \left( \bigoplus_{j \in J} Y_j \right)_{c_0} \right)$ have the BPBpp if and only if the pair $(X, Y_j)$ satisfies it as well for all $j \in J$.
\item[(b)]  If the pair $\left( X, \left( \bigoplus_{j \in J} Y_j \right)_{\ell_1} \right)$ has the BPBpp, then the pair $(X, Y_j)$  satisfies it as well for all $j \in J$.
\end{itemize}
\label{prop100}
\end{proposition}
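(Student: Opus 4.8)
The plan is to treat both parts through one mechanism: test the sum's property on an operator supported on a single coordinate, then read off that coordinate. Throughout, for an operator $S$ into one of the sums I write $S=(S_j)_{j\in J}$ with $S_j=\pi_j\circ S\in\mathcal L(X,Y_j)$, where $\pi_j$ is the (norm-one) $j$-th coordinate projection and $\iota_j\colon Y_j\to(\bigoplus Y_j)$ is the canonical isometric inclusion. The elementary facts I will use are $\|(S_j)(u)\|=\sup_j\|S_j(u)\|$ for the $\ell_\infty$/$c_0$-sums and $\|(S_j)(u)\|=\sum_j\|S_j(u)\|$ for the $\ell_1$-sum, so that in every case $\|S_j\|\le\|S\|$ and $\|S_j-T_j\|\le\|S-T\|$.

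First I would prove the forward implication of (a) together with (b), since they share the same opening move. Fix $j_0\in J$, let $T_{j_0}$ have $\|T_{j_0}\|=1$ and $\|T_{j_0}(x)\|>1-\eta(\e_0)$, and put $T:=\iota_{j_0}\circ T_{j_0}$, which has $\|T\|=1$ and $\|T(x)\|=\|T_{j_0}(x)\|$ in either sum. Applying the BPBpp of the sum with parameter $\e_0$ produces $S=(S_j)$ with $\|S\|=1$, $\|S(x)\|=1$ and $\|S-T\|<\e_0$; since $T_j=0$ for $j\neq j_0$ we get $\|S_j\|<\e_0$ for those $j$ and $\|S_{j_0}-T_{j_0}\|<\e_0$. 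In the $\ell_\infty$/$c_0$ case this already finishes: $\|S(x)\|=\sup_j\|S_j(x)\|=1$ while $\|S_j(x)\|\le\|S_j\|<\e_0<1$ for $j\neq j_0$, so the supremum is forced onto the $j_0$-th term, giving $\|S_{j_0}(x)\|=1$; together with $1=\|S_{j_0}(x)\|\le\|S_{j_0}\|\le\|S\|=1$ this yields $\|S_{j_0}\|=1$ and exact attainment at $x$. Hence $(X,Y_{j_0})$ has the BPBpp, with a function derived from the sum's.

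The same computation in the $\ell_1$-sum only gives $\sum_{j\neq j_0}\|S_j(x)\|<\e_0$ and hence $\|S_{j_0}(x)\|>1-\e_0$: the extracted coordinate almost attains its norm at $x$ but need not attain it. Upgrading this to exact attainment at the fixed point $x$ is, I expect, the main obstacle of part (b): because the $\ell_1$-norm is additive, the exact attainment of $S$ at $x$ leaks a little mass onto the other coordinates, and one cannot recover it by renormalising $S_{j_0}$ (that spoils the operator norm) nor by iterating the extraction (the attainment gained is only of the order of the perturbation, so the errors are not summable—indeed uniform smoothness of $X$ alone cannot suffice, consistent with the failure example promised at the end of the section). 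I would close the gap by a rank-one correction adapted to $x$: using that $X$ is uniformly smooth, hence smooth, take the unique $x^*\in S_{X^*}$ with $x^*(x)=1$, set $v:=S_{j_0}(x)/\|S_{j_0}(x)\|$, and consider $W(u):=S_{j_0}(u)+(1-\|S_{j_0}(x)\|)\,x^*(u)\,v$, so that $W(x)=v$, $\|W(x)\|=1$ and $\|W-S_{j_0}\|=1-\|S_{j_0}(x)\|<\e_0$. The decisive step is to prove $\|W\|=1$ rather than merely $\|W\|\le 1+\e_0$, and this is exactly where the uniform smoothness of $X$ must be exploited, on the directions $u$ with $x^*(u)$ close to $1$; I regard this as the heart of the argument.

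Finally, for the reverse implication of (a) I would run the construction backwards. Given $T=(T_j)$ with $\|T\|=1$ and $\sup_j\|T_j(x)\|=\|T(x)\|>1-\eta(\e_0)$, choose $j_0$ with $\|T_{j_0}(x)\|>1-\eta(\e_0)$, normalise $\widehat T_{j_0}:=T_{j_0}/\|T_{j_0}\|$ (which still satisfies $\|\widehat T_{j_0}(x)\|>1-\eta(\e_0)$ because $\|T_{j_0}\|\le1$), and apply the BPBpp of $(X,Y_{j_0})$ to get $S_{j_0}$ with $\|S_{j_0}\|=\|S_{j_0}(x)\|=1$ and $\|S_{j_0}-\widehat T_{j_0}\|$ small. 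Replacing only the $j_0$-th coordinate of $T$ by $S_{j_0}$ (the family still lands in the $c_0$-sum, as it differs from $T$ in one coordinate) yields $S=(S_j)$ with $\|S\|=\sup_j\|S_j\|=1$, $\|S(x)\|\ge\|S_{j_0}(x)\|=1$ hence $=1$, and $\|S-T\|=\|S_{j_0}-T_{j_0}\|\le\|S_{j_0}-\widehat T_{j_0}\|+(1-\|T_{j_0}\|)<\e$ after fixing the parameters. The one point needing care is uniformity: this step uses the BPBpp function of a single $(X,Y_{j_0})$, so for the equivalence to hold one needs a common $\eta$ valid for all $j$—which is precisely what the forward implication already delivers, so I would read the statement with this uniform interpretation.
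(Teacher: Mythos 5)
Part (a) of your proposal is correct, and it is in substance the argument the paper intends: the paper's proof of this proposition consists of citing \cite[Proposition 2.4]{ACKLM} (for (a)) and \cite[Proposition 2.7]{ACKLM} (for (b)) and ``adapting'' them, and your coordinate-extraction/coordinate-replacement argument is exactly that adaptation. Your caveat about uniformity is also right: the ``if and only if'' is only meaningful when the pairs $(X,Y_j)$ are assumed to have the BPBpp with a \emph{common} function $\eta$, which is what the forward implication delivers.

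Part (b), however, has a genuine gap, and the specific repair you propose cannot work. You correctly reduce to: $\|S_{j_0}\|\le 1$, $\|S_{j_0}(x)\|>1-\varepsilon_0$, $\|S_{j_0}-T_{j_0}\|<\varepsilon_0$, and you then hope that $W(u):=S_{j_0}(u)+(1-\|S_{j_0}(x)\|)\,x^*(u)\,v$ has norm exactly $1$, with uniform smoothness of $X$ supplying the proof. This is false in general, already when $X$ is a Hilbert space (the best-behaved uniformly smooth space there is). Take $X=\ell_2^2$ (real), $Y_{j_0}=\R$, $x=e_1$, $x^*=\langle\,\cdot\,,e_1\rangle$, and $S_{j_0}=\langle\,\cdot\,,f\rangle$ with $f=\bigl(1-\delta,\sqrt{2\delta-\delta^2}\bigr)\in S_X$. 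Then $\|S_{j_0}\|=1$, $S_{j_0}(x)=1-\delta$, $v=1$, and $W=\langle\,\cdot\,,f+\delta e_1\rangle$, so $\|W\|=\sqrt{1+2\delta-\delta^2}>1$ while $W(x)=1$. Thus the corrected operator has norm strictly larger than $1$ and does \emph{not} attain its norm at $x$, and renormalizing destroys attainment. The underlying problem is that a rank-one correction on the domain side inflates the norm in directions unrelated to $x$; no smoothness hypothesis can prevent this.

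The correct repair --- and this is what the adaptation of \cite[Proposition 2.7]{ACKLM} invoked by the paper amounts to --- is a correction on the \emph{range} side: collapse the $\ell_1$-sum onto the $j_0$-th coordinate. Set $v:=S_{j_0}(x)/\|S_{j_0}(x)\|$, and for each $j\ne j_0$ pick $y_j^*\in S_{Y_j^*}$ with $y_j^*(S_j(x))=\|S_j(x)\|$. Define $\Psi\colon \bigl(\bigoplus_{j\in J} Y_j\bigr)_{\ell_1}\longrightarrow Y_{j_0}$ by $\Psi\bigl((y_j)_j\bigr):=y_{j_0}+\bigl(\sum_{j\ne j_0}y_j^*(y_j)\bigr)v$. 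Then $\|\Psi\|\le 1$ by the triangle inequality, and $\Psi\circ T=T_{j_0}$ since $T$ is supported on the $j_0$-th coordinate. Hence $S':=\Psi\circ S$ satisfies $\|S'-T_{j_0}\|\le\|S-T\|<\varepsilon_0$ and
\begin{equation*}
S'(x)=S_{j_0}(x)+\Bigl(\sum_{j\ne j_0}\|S_j(x)\|\Bigr)v=\Bigl(\sum_{j\in J}\|S_j(x)\|\Bigr)v=v,
\end{equation*}
so $1=\|S'(x)\|\le\|S'\|\le 1$, i.e.\ $S'$ attains its norm exactly at the fixed point $x$. No smoothness is needed at this step, and the fact that the BPBpp fixes the point $x$ makes the adaptation of the BPBp argument go through verbatim.
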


\begin{proof} For (a), use \cite[Proposition 2.4]{ACKLM} adapting it for our property. For (b), do the same by using \cite[Proposition 2.7]{ACKLM}.
\end{proof}

We do not know if the converse of Proposition~\ref{prop100} (b) is true even for finite sums. We finish this section by commenting that there are $2$-dimensional uniformly smooth Banach spaces $X$ such that the pair $(X, Y)$ fails the BPBpp for some Banach space $Y$.

\begin{example} It is proved in \cite[Corollary 3.3]{KL} that $2$-dimensional real Banach space $X$ is uniformly convex if and only the pair $(X, Y)$ has the BPBp for all Banach spaces $Y$. Let $X_0$ be a $2$-dimensional Banach space which is  uniformly smooth but not strictly convex. Then, there is a Banach space $Y_0$ such that the pair $(X_0, Y_0)$ fails the BPBp and so it can not satisfy the BPBpp either.
\end{example}

\section{The Bishop-Phelps-Bollob\'as point property for bilinear mappings}

In this section our goal is to provide some results about the Bishop-Phelps-Bollob\'as point property for bilinear mappings.  It is not difficult to see that the BPBpp for bilinear mappings implies the BPBp for bilinear mappings. Note by a routinely change of parameters that we may consider $B \in B(X \times Y, Z)$ with $\|B\| \leq 1$ instead of $\|B\| = 1$ in the Definition \ref{sBPBpbilinear} (we will use this in the Theorem \ref{DKL13}). It is worth to mention that the pair $(\ell_1 \times \ell_1, \K)$ fails the BPBpp for bilinear forms since it fails the BPBp for bilinear forms.

Our first result gives a partial characterization for the pair $(X \times Y, \K)$ to have the BPBpp for bilinear forms. It was proved in \cite[Proposition 2.4]{ABGM} (and independently in \cite[Theorem 1.1]{Dai}) that if $Y$ is a uniformly convex Banach space then the pair $(X \times Y, \K)$ has the BPBp for bilinear forms if and only if the pair $(X, Y^*)$ has the BPBp for operators. We will do the same to our property but assuming now that $Y$ is a Hilbert space. It is easy to check that if the pair $(X \times Y, \K)$ has the BPBp for bilinear forms then the pair $(X, Y^*)$ has the BPBp for operators by using the natural identification between the Banach spaces $B(X \times Y, \K)$ and $\mathcal{L}(X, Y^*)$. The same happens in our case. So we have to prove the converse. That is what we do in the next result.

\begin{theorem} Let $X$ be a uniformly smooth Banach space and let $H$ be a Hilbert space. Then the pair $(X \times H, \K)$ has the BPBpp for bilinear forms if and only if the pair $(X, H^*)$ has the BPBpp (for operators).
\label{DKL10}
\end{theorem}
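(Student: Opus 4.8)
The plan is to establish only the nontrivial implication, namely that if $(X, H^*)$ has the BPBpp for operators then $(X \times H, \K)$ has the BPBpp for bilinear forms. The reverse implication is the easy one announced before the statement: under the isometric identification $B \leftrightarrow T_B$, where $T_B(x)(h) = B(x,h)$ and $\|T_B\| = \|B\|$, a hypothesis $\|T(x_0)\| > 1 - \eta$ forces (since $H$ is reflexive, so $T(x_0) \in H^*$ attains its norm at some $y_0 \in S_H$) a point $y_0$ with $|B(x_0, y_0)| > 1 - \eta$, to which the bilinear property applies. Throughout the converse I would keep the identification $B \leftrightarrow T_B$ and the Riesz identification of $H^*$ with $H$ in play.

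First I would convert the bilinear hypothesis into an operator hypothesis. Given $\|B\| = 1$ and $(x_0, y_0) \in S_X \times S_H$ with $|B(x_0, y_0)| > 1 - \eta$, put $T := T_B$; then $\|T\| = 1$ and $\|T(x_0)\| \ge |T(x_0)(y_0)| = |B(x_0, y_0)| > 1 - \eta$. Letting $\eta_{\mathrm{op}}(\cdot)$ denote the modulus of the operator BPBpp for $(X, H^*)$ and fixing a tolerance $\beta > 0$, if I choose $\eta \le \eta_{\mathrm{op}}(\beta)$ the operator property yields $S \in \mathcal{L}(X, H^*)$ with $\|S\| = 1$, $\|S(x_0)\| = 1$, and $\|S - T\| < \beta$.

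The main obstacle, and the real content of the proof, is that the operator property only pins down the domain point $x_0$: the functional $S(x_0) \in H^*$ attains its norm at some $u \in S_H$ (via Riesz, $S(x_0) = \langle \cdot, u\rangle$ with $\|u\|=1$), but there is no reason for $u$ to coincide with the prescribed second coordinate $y_0$. I would resolve this precisely as in the Hilbert-domain argument of Theorem~\ref{DKL4}, by rotating the second variable. Since $\|S - T\| < \beta$ gives $|\langle y_0, u\rangle| = |S(x_0)(y_0)| > 1 - \eta - \beta =: 1 - \delta$, the parallelogram identity in $H$ produces a unimodular scalar $\mu$ with $\|y_0 - \mu u\|^2 = 2 - 2|\langle y_0, u\rangle| < 2\delta$. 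Because $H$ is a Hilbert space there is then a surjective linear isometry $R \colon H \to H$ with $R y_0 = \mu u$ and $\|R - Id_H\| = \|y_0 - \mu u\| < \sqrt{2\delta}$, built from the planar rotation sending $y_0$ to $\mu u$ while fixing the orthogonal complement. The only genuinely delicate point is checking, in the complex case, that this rotation can be taken complex-linear; this is handled by writing $\mu u = t\, y_0 + \sqrt{1 - t^2}\, e$ with $t = |\langle y_0, u\rangle| \in (0,1]$ real and $e \perp y_0$, so that the rotation is given by a real orthogonal matrix acting complex-linearly on $\mathrm{span}_{\C}\{y_0, e\}$.

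Finally I would set $A(x, h) := S(x)(R h)$, equivalently $T_A = R^* \circ S$. As $R$ is a surjective isometry, $\|A\| = \|S\| = 1$, and $|A(x_0, y_0)| = |S(x_0)(\mu u)| = |\mu|\,|\langle u, u\rangle| = 1$, so $A$ attains its norm at the prescribed point $(x_0, y_0)$. For the perturbation I would estimate $\|A - B\| \le \|R^* - Id_{H^*}\|\,\|S\| + \|S - T\| < \sqrt{2\delta} + \beta$. It then remains only to calibrate the constants: given $\e$, first fix $\beta < \min\{\e/2,\ \e^2/16\}$ and then set the bilinear modulus $\eta(\e) \le \min\{\eta_{\mathrm{op}}(\beta),\ \e^2/16\}$, so that $\delta = \eta + \beta < \e^2/8$ forces $\sqrt{2\delta} + \beta < \e$. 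This completes the construction and hence the converse implication.
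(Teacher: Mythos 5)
Your proof is correct and follows essentially the same route as the paper's: identify $B$ with the operator $T_B$, apply the operator BPBpp to get $S$, show that the point of $S_H$ where $S(x_0)$ attains its norm is close to $y_0$ (the paper does this via the modulus of uniform convexity of $H$, you via the direct inner-product identity $\|y_0-\mu u\|^2 = 2-2|\langle y_0,u\rangle|$), and then compose $S$ with a linear isometry of $H$ close to the identity carrying $y_0$ to that point. The only cosmetic differences are that you carry the unimodular scalar $\mu$ explicitly rather than normalizing to real parts, and you write out the rotation construction and the easy converse implication, both of which the paper leaves implicit.
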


\begin{proof} Let $\e > 0$ be given. Assume that the pair $(X, H^*)$ has the BPBpp for operators with $\eta(\e) > 0$. Consider $\delta_H(\e) > 0$ the modulus of uniform convexity of $H$. Let $B: X \times H \longrightarrow \K$ be a bilinear form with $\|B\| = 1$ and $(x_0, h_0) \in S_X \times S_H$ be such that
\begin{equation*}
\re B(x_0, h_0) > 1 - \min \left\{ \delta_H(\e), \eta(\delta_H(\e)) \right\}.
\end{equation*}
Define the bounded linear operator $T: X \rightarrow H^*$ by $T(x)(h) := B(x, h)$ for all $x \in X$ and $h \in H$. Then $\|T\| = \|B\| = 1$ and
\begin{equation*}
\|T(x_0)\| \geq \re T(x_0)(h_0) = \re B(x_0, h_0) > 1 - \eta(\delta_H(\e)).
\end{equation*}
There exists $S \in \mathcal{L}(X, H^*)$ with $\|S\| = 1$ such that
\begin{equation*}
\|S(x_0)\| = 1 \ \ \ \mbox{and} \ \ \ \|S - T\| < \delta_H(\e) < 2 \e.
\end{equation*}
Let $h_1 \in S_H$ be such that $\re h_1(S(x_0)) = \re S(x_0)(h_1) = \|S(x_0)\| = 1$. We prove that $\|h_0 - h_1\| < \e$. Note first that since
\begin{equation*}
\delta_H(\e) > \|S - T\| \geq \re T(x_0)(h_0) - \re S(x_0)(h_0) > 1 - \delta_H(\e) - \re S(x_0)(h_0),
\end{equation*}
$\re S(x_0)(h_0) > 1 - 2 \delta_H(\e)$. Then
\begin{equation*}
\left\| \frac{h_0 + h_1}{2} \right\| \geq \re \left( \frac{ S(x_0)(h_0) + S(x_0)(h_1)}{2} \right) > \frac{1 - 2 \delta_H(\e) + 1}{2} = 1 - \delta_H(\e).
\end{equation*}
So $\|h_0 - h_1\| < \e$ as desired. Since $H$ is Hilbert, we can find a linear isometry $R: H \longrightarrow H$ with $\|R\| = 1$ such that $R(h_0) = h_1$ and $\|R - Id_H\| < \e$. We define the bilinear form $A: X \times H \longrightarrow \K$ by $A(x, h) := S(x)(R(h))$ for all $(x, h) \in X \times H$. Then $\|A\| \leq 1$ and $|A(x_0, h_0)| = |S(x_0)(R(h_0))| = |S(x_0)(h_1)| = \re S(x_0)(h_1) = 1$. So $\|A\| = |A(x_0, h_0)| = 1$. Moreover, for all $(x, h) \in S_X \times S_H$, we have that
\begin{eqnarray*}
|A(x, h) - B(x, h)| &\leq& |S(x)(R(h)) - S(x)(h)| + |S(x)(h) - T(x)(h)|  \\
&\leq& \|R - Id_H\| + \|S - T\| \\
&<& 3 \e.
\end{eqnarray*}
Since $(x, h) \in S_X \times S_H$ is arbitrary, we get that $\|A - B\| < 3 \e$. This shows that the pair $(X \times H, \K)$ has the BPBpp for bilinear forms.
\end{proof}

As a consequence of Theorem \ref{DKL10}, we have the following corollary.

\begin{corollary} Let $H_1$ and $H_2$ be Hilbert spaces. Then the pair $(H_1 \times H_2, \K)$ has the BPBpp for bilinear forms.
\label{DKL12}
\end{corollary}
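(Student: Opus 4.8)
The plan is to apply Theorem \ref{DKL10} with $X = H_1$ and $H = H_2$, so that the corollary becomes a direct specialization of that equivalence. To invoke Theorem \ref{DKL10} I first have to verify its two standing hypotheses for this choice of spaces: that the domain factor $X = H_1$ is uniformly smooth, and that the second factor $H = H_2$ is a Hilbert space. The second is given. For the first I would recall the fact noted before Proposition \ref{DKL1}, namely that $X$ is uniformly smooth if and only if $X^*$ is uniformly convex; since a Hilbert space is self-dual up to isometry and is uniformly convex, $H_1 \cong H_1^*$ is uniformly convex and hence $H_1$ is uniformly smooth. Thus both hypotheses of Theorem \ref{DKL10} hold.

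With the hypotheses in place, Theorem \ref{DKL10} reduces the assertion to showing that the pair $(H_1, H_2^*)$ has the BPBpp for operators. Here I would simply treat $H_2^*$ as a Banach space (it is in fact again a Hilbert space, but that is not needed) and use that the domain $H_1$ is a Hilbert space. Theorem \ref{DKL4} asserts exactly that $(H, Y)$ has the BPBpp whenever $H$ is a Hilbert space and $Y$ is an arbitrary Banach space; applying it with $H = H_1$ and $Y = H_2^*$ gives that $(H_1, H_2^*)$ has the BPBpp for operators. Feeding this back through Theorem \ref{DKL10} then yields that $(H_1 \times H_2, \K)$ has the BPBpp for bilinear forms.

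I do not expect any genuine obstacle here: all the substantive work has already been packaged into the two cited results, the hard part being the ``if'' direction of Theorem \ref{DKL10} (reconstructing the near-optimal bilinear form from the operator via the Hilbert-space isometry $R$) and the operator statement of Theorem \ref{DKL4}. The only points requiring care are bookkeeping ones: matching the roles correctly so that it is $H_1$ (the first factor) that must be checked uniformly smooth while $H_2$ (the second factor) must be Hilbert, and noting that the dual $H_2^*$ introduced by Theorem \ref{DKL10} is handled by Theorem \ref{DKL4} precisely because that result imposes no condition whatsoever on the range space.
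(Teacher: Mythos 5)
Your proposal is correct and is exactly the paper's proof: apply Theorem \ref{DKL4} to get that $(H_1, H_2^*)$ has the BPBpp for operators, then feed this into Theorem \ref{DKL10}. The extra verification that $H_1$ is uniformly smooth (via self-duality and uniform convexity) is sound bookkeeping that the paper leaves implicit.
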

\begin{proof} The pair $(H_1, H_2^*)$ has the BPBpp by Theorem \ref{DKL4}. Hence, Theorem \ref{DKL10} gives the desired result.
\end{proof}

In the following, when the range space has the property $\beta$, we get a positive result as in the operator case.
\begin{theorem}
Let $X, Y$ and $Z$ be Banach spaces. Suppose that the pair $(X \times Y, \K)$ has the BPBpp for bilinear forms and that $Z$ has the property $\beta$. Then the pair $(X \times Y, Z)$ has the BPBpp for bilinear mappings.
\label{DKL13}
\end{theorem}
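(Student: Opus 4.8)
The plan is to mimic the structure of Proposition~\ref{DKL3}, where a property-$\beta$ range space was used to upgrade the scalar BPBpp for operators into the vector-valued BPBpp. Here I want to upgrade the BPBpp for bilinear \emph{forms} into the BPBpp for bilinear \emph{mappings} into a property-$\beta$ space $Z$. Let $Z$ have property $\beta$ with constant $\rho$ via sets $\{z_i : i \in I\} \subset S_Z$ and $\{z_i^* : i \in I\} \subset S_{Z^*}$. Given $\e > 0$, I first choose a small parameter $\xi > 0$ satisfying an inequality of exactly the form (\ref{propbeta}), namely $1 + \rho(\frac{\e}{4} + \xi) < (1 + \frac{\e}{4})(1 - \xi)$, which forces $\xi < \frac{\e}{4}$. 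Let $\eta(\cdot)$ be the function witnessing the BPBpp for bilinear forms on $(X \times Y, \K)$; the claimed modulus for $(X \times Y, Z)$ will be $\eta(\xi)$ up to an innocuous change of parameters.

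Now suppose $B \in B(X \times Y, Z)$ with $\|B\| \le 1$ and $(x_0, y_0) \in S_X \times S_Y$ satisfy $\|B(x_0, y_0)\| > 1 - \eta(\xi)$. By property~$\beta$ (condition (iii)) there is an index $i_0 \in I$ with $\re z_{i_0}^*(B(x_0, y_0)) > 1 - \eta(\xi)$. Then the \emph{scalar} bilinear form $b := z_{i_0}^* \circ B \in B(X \times Y, \K)$ has $\|b\| \le 1$ and $\re b(x_0, y_0) > 1 - \eta(\xi)$, so by the BPBpp for bilinear forms there exists $a \in B(X \times Y, \K)$ with $\|a\| = 1$, $|a(x_0, y_0)| = 1$, and $\|a - b\| < \xi$. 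Following the pattern of Proposition~\ref{DKL3}, I then define the perturbed mapping
\begin{equation*}
A(x, y) := B(x, y) + \left[ \left( 1 + \frac{\e}{4} \right) a(x, y) - z_{i_0}^*(B(x, y)) \right] z_{i_0} \qquad (x \in X,\ y \in Y),
\end{equation*}
which is bilinear and satisfies $\|A - B\| < \frac{\e}{4} + \xi < \frac{\e}{2}$ since $\|a - b\| < \xi$ and $\frac{\e}{4}\|a\| = \frac{\e}{4}$.

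The core of the argument is computing $\|A\|$ via $\|A\| = \sup_{i \in I} \sup_{x,y} |z_i^*(A(x,y))|$. For the index $i_0$ one gets $z_{i_0}^*(A(x,y)) = (1 + \frac{\e}{4})a(x,y)$ (the $B$-terms cancel because $z_{i_0}^*(z_{i_0}) = 1$), so the $i_0$-slice has norm exactly $1 + \frac{\e}{4}$ and is attained at $(x_0, y_0)$ with modulus $1$. For $i \ne i_0$, the crucial estimate is $|z_i^*(A(x,y))| \le 1 + \rho(\frac{\e}{4} + \xi)$, using $|z_i^*(z_{i_0})| \le \rho$ on the bracketed term together with $\|B\| \le 1$; by the choice (\ref{propbeta}) this is strictly less than $1 + \frac{\e}{4}$. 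Hence $\|A\| = 1 + \frac{\e}{4}$ is attained precisely through $z_{i_0}^*$ at $(x_0, y_0)$, so setting $\widetilde{A} := A / \|A\|$ gives $\|\widetilde{A}\| = 1$, $\|\widetilde{A}(x_0, y_0)\| = 1$, and $\|\widetilde{A} - B\| \le 2\|A - B\| < \e$. The main obstacle, and the only place demanding care, is verifying the off-diagonal estimate cleanly: one must confirm that $\|B\| \le 1$ and boundedness of $a$ on $B_X \times B_Y$ combine to bound $z_i^*$ of the bracketed bilinear expression by $\rho(\frac{\e}{4} + \xi)$ uniformly, so that inequality (\ref{propbeta}) does its job; the rest is the routine normalization already rehearsed in Proposition~\ref{DKL3}.
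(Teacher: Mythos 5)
Your proposal is correct and follows essentially the same route as the paper's own proof: the same choice of $\xi$ via (\ref{propbeta}), the same perturbation $A(x,y) = B(x,y) + \bigl[\bigl(1+\frac{\e}{4}\bigr)a(x,y) - z_{i_0}^*(B(x,y))\bigr]z_{i_0}$, and the same slice-wise computation showing the norm $1+\frac{\e}{4}$ is attained only through the $i_0$-slice at $(x_0,y_0)$, followed by normalization. Your final estimate $\|\widetilde{A}-B\| \le 2\|A-B\| < \e$ is in fact slightly cleaner than the paper's closing bound; there are no gaps.
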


\begin{proof} Let $Z$ be a Banach space satisfying the property $\beta$ with constant $\rho \in [0, 1)$ and assume that $X$ and $Y$ are Banach spaces. Let $\e > 0$ be given and consider $\xi > 0$ satisfying (\ref{propbeta}) in the Proposition \ref{DKL3}. Let $B \in B(X \times Y, Z)$ with $\|B\|= 1$ and $(x_0, y_0) \in S_X \times S_Y$ be such that 
\begin{equation*}
\|B(x_0, y_0)\| > 1 - \eta(\xi),
\end{equation*}
where $\eta(\e) > 0$ is the constant for the pair $(X \times Y, \K)$ which we are assuming to have the BPBpp. There exists some $\alpha_0 \in \Lambda$ such that
\begin{equation*}
\re (z_{\alpha_0}^* \circ B)(x_0, y_0) = \re z_{\alpha_0}^* (B(x_0, y_0)) > 1 - \eta(\delta).
\end{equation*}
Then there exists $\widetilde{A} \in B(X \times Y, \K)$ with $\|\widetilde{A}\| = 1$ such that 
\begin{equation*}
| \widetilde{A} (x_0, y_0)| = 1 \ \ \mbox{and} \ \ \| \widetilde{A} - (z_{\alpha_0}^* \circ B) \| < \xi.
\end{equation*}
Define $A: X \times Y \longrightarrow Z$ by
\begin{equation*}
A(x, y) := B(x, y) + \left[ \left( 1 + \frac{\e}{4} \right) \widetilde{A}(x, y) - (z_{\alpha_0}^* \circ B) (x, y) \right] z_{\alpha_0}
\end{equation*}
for all $(x, y) \in X \times Y$. Notice that for all $\alpha \in \Lambda$ and $(x, y) \in X \times Y$, we have
\begin{equation*}
z_{\alpha}^* (A(x, y)) = z_{\alpha}^* (B(x, y)) + z_{\alpha}^* (z_{\alpha_0}) \left[ \left( 1 + \frac{\e}{4} \right) \widetilde{A}(x, y) - (z_{\alpha_0}^* \circ B)(x, y) \right].
\end{equation*}
So if $\alpha = \alpha_0$, then $z_{\alpha_0}^*(A(x, y)) = \left(1  + \frac{\e}{4}\right) \widetilde{A} (x, y)$ and this implies that $|z_{\alpha_0}^*(A(x, y))| \leq 1 + \frac{\e}{4}$. On the other hand, if $\alpha \not= \alpha_0$, then
\begin{equation*}
|z_{\alpha}^*(A(x, y))| \leq 1 + \rho \left( \frac{\e}{4} + \xi \right) < \left(1 - \frac{\e}{4} \right)(1 - \xi) < 1 + \frac{\e}{4}.
\end{equation*}
Since $|z_{\alpha_0}^*(A(x_0, y_0))| = 1 + \frac{\e}{4}$, $\|A\| = \|A(x_0, y_0)\|$. Also, we have that $\|B - A\| < \frac{\e}{4} + \xi < \e$. So if $C := \frac{A}{\|A\|}$ then we have that $\|C(x_0, y_0)\| = 1$ and $\|C - B\| < 2 \e$, proving that the pair $(X \times Y, Z)$ has the BPBpp for bilinear mappings.
\end{proof}

As a consequence of Theorem \ref{DKL13}, we have the following corollary.

\begin{corollary} Let $H_1$ and $H_2$ be Hilbert spaces and let $Z$ be a Banach space with the property $\beta$. Then the pair $(H_1 \times H_2, Z)$ has the BPBpp for bilinear mappings.
\end{corollary}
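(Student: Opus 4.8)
The plan is to obtain this corollary as an immediate concatenation of the two preceding results, Corollary~\ref{DKL12} and Theorem~\ref{DKL13}; no new estimate is needed, since all the analytic content has already been established.

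First I would invoke Corollary~\ref{DKL12}, which states precisely that for any two Hilbert spaces $H_1$ and $H_2$ the pair $(H_1 \times H_2, \K)$ has the BPBpp for bilinear forms. This supplies exactly the scalar-valued hypothesis demanded by Theorem~\ref{DKL13}. I would then apply Theorem~\ref{DKL13} with $X = H_1$, $Y = H_2$, and the given range space $Z$: that theorem takes arbitrary Banach spaces $X,Y$ as input, so specializing them to Hilbert spaces is legitimate, and its remaining hypothesis—that $Z$ has the property $\beta$—is satisfied by assumption. The conclusion of Theorem~\ref{DKL13} is then exactly the assertion that $(H_1 \times H_2, Z)$ has the BPBpp for bilinear mappings, which is what we want.

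There is no genuine obstacle here: the substance of the argument resides entirely in Corollary~\ref{DKL12} (whose own proof rests on Theorem~\ref{DKL4} via Theorem~\ref{DKL10}) and in Theorem~\ref{DKL13} (the property-$\beta$ lifting of the scalar-valued property to the vector-valued one). The only point worth a moment's verification is that the hypotheses line up verbatim, namely that Theorem~\ref{DKL13} requires solely the BPBpp for bilinear forms of $(X\times Y,\K)$ together with property $\beta$ of $Z$, both of which are in hand. Thus the proof collapses to a single sentence: the pair $(H_1 \times H_2, \K)$ has the BPBpp for bilinear forms by Corollary~\ref{DKL12}, whence Theorem~\ref{DKL13} yields that $(H_1 \times H_2, Z)$ has the BPBpp for bilinear mappings.
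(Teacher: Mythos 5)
Your proposal is correct and coincides exactly with the paper's own proof, which also obtains the corollary by combining Corollary~\ref{DKL12} with Theorem~\ref{DKL13}. The hypotheses do line up as you checked, so nothing more is needed.
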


\begin{proof} This is just a combination of Corollary \ref{DKL12} and Theorem \ref{DKL13}.
\end{proof}

Let us now consider compact bilinear mappings. Let $X, Y$ and $Z$ be Banach spaces. We say that the bilinear mapping $B: X \times Y \longrightarrow Z$ is {\it compact} if $B(B_X \times B_Y) \subset Z$ is precompact in $Z$. We denote by $\mathcal{K}(X \times Y, Z)$ the set of all compact bilinear mappings from $X \times Y$ into $Z$. We define the {\it BPBpp for compact bilinear mappings} by using just compact bilinear mappings in Definition \ref{sBPBpbilinear}. Indeed, we consider compact bilinear mappings $A$ and $B$ in that definition.  Our aim in the next lines is to prove that the pair $(H_1 \times H_2, C(K))$ has the BPBpp for compact bilinear mappings whenever $H_1$ and $H_2$ are Hilbert spaces and $K$ is a compact Hausdorff topological space. First, we prove two auxiliary results and our promised one will be a consequence of them.

 For a function $\varphi: K \longrightarrow B (X \times Y, \K)$, we say that $\varphi$ is {\it $\tau_p$-continuous} if the mapping $t \longmapsto \varphi(t)(x, y)$ is continuous on $K$ for each $(x, y) \in X \times Y$. In the next lemma, we prove that there exists a natural (isometric) identification between the spaces $B (X \times Y, C(K))$ and the space of all $\tau_p$-continuous functions from $K$ into $B (X \times Y, \K)$ endowed with the supremum norm $\|\varphi\| = \sup_{t \in K} \|\varphi(t)\|$.

\begin{proposition}{\cite[Theorem 1, p. 490]{DS}} Let $X$ and $Y$ be Banach spaces. Let $K$ be a compact Hausdorff topological space. Then,
	\begin{itemize}
		\item[(i)] there exists an isomorphic isometry between $B (X \times Y, C(K))$ and the set of all $\tau_p$-continuous functions from $K$ into $B (X \times Y, \K)$ and
		\item[(ii)] The subspace $\mathcal{K}(X \times Y, C(K))$ of all compact bilinear mappings from $X \times Y$ into $C(K)$ corresponds to the set of all norm-continuous functions.
	\end{itemize}
	\label{DKL14}
\end{proposition}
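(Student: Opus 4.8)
The plan is to make the identification completely explicit and then read off both (i) and (ii) from a single formula. Given $B \in B(X \times Y, C(K))$, I would define $\varphi_B \colon K \longrightarrow B(X \times Y, \K)$ by
\[
\varphi_B(t)(x, y) := B(x, y)(t) \qquad (t \in K, \ (x,y) \in X \times Y).
\]
Each $\varphi_B(t)$ is bilinear and bounded because $B$ is, with $|B(x,y)(t)| \leq \|B(x,y)\|_\infty \leq \|B\| \, \|x\| \, \|y\|$, and $\varphi_B$ is $\tau_p$-continuous by construction, since $t \longmapsto B(x,y)(t)$ is exactly the continuous function $B(x,y) \in C(K)$. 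The map $B \longmapsto \varphi_B$ is clearly linear, and it is isometric by the harmless interchange of suprema
\[
\|\varphi_B\| = \sup_{t \in K} \|\varphi_B(t)\| = \sup_{t \in K} \sup_{x \in B_X, \, y \in B_Y} |B(x,y)(t)| = \sup_{x \in B_X, \, y \in B_Y} \|B(x,y)\|_\infty = \|B\|.
\]

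To finish (i) I would verify surjectivity: given a $\tau_p$-continuous $\varphi \colon K \to B(X \times Y, \K)$, set $B(x,y)$ to be the function $t \longmapsto \varphi(t)(x,y)$, which lies in $C(K)$ precisely by $\tau_p$-continuity; then $B$ is bilinear, $\|B\| = \sup_t \|\varphi(t)\| = \|\varphi\|$, and $\varphi_B = \varphi$. The only point requiring care here is that $\sup_t \|\varphi(t)\|$ is finite. If the target space is by definition the \emph{bounded} $\tau_p$-continuous functions this is automatic; otherwise it follows from two applications of the uniform boundedness principle on the compact $K$ (where $t \mapsto \varphi(t)(x,y)$ is continuous, hence bounded, for each fixed $(x,y)$): first to the functionals $y \longmapsto \varphi(t)(x,\cdot)$ in $Y^*$ for fixed $x$, then to the operators $x \longmapsto \varphi(t)(x,\cdot)$ in $\mathcal{L}(X, Y^*)$, using $\|\varphi(t)\| = \sup_{x \in B_X} \|\varphi(t)(x,\cdot)\|_{Y^*}$.

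For (ii) the key observation is the identity
\[
\|\varphi_B(s) - \varphi_B(t)\| = \sup_{x \in B_X, \, y \in B_Y} |B(x,y)(s) - B(x,y)(t)|,
\]
which says that norm-continuity of $\varphi_B$ is exactly equicontinuity of the family $\mathcal{F} := \{ B(x,y) : (x,y) \in B_X \times B_Y \} \subset C(K)$. Since $\|B\| < \infty$ already makes $\mathcal{F}$ uniformly bounded, the Arzel\`a--Ascoli theorem on the compact space $K$ yields that $\mathcal{F}$ is relatively compact in $C(K)$ if and only if it is equicontinuous; and relative compactness of $\mathcal{F} = B(B_X \times B_Y)$ is precisely the compactness of $B$. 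Chaining these equivalences gives $B \in \mathcal{K}(X \times Y, C(K))$ if and only if $\varphi_B$ is norm-continuous.

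The main obstacle I anticipate is the bookkeeping in (ii): one must apply Arzel\`a--Ascoli in the form valid for a general (not necessarily metrizable) compact Hausdorff $K$, where equicontinuity at $t$ means that for each $\e > 0$ there is a neighborhood $U$ of $t$ with $|f(s) - f(t)| < \e$ for all $s \in U$ and all $f \in \mathcal{F}$, and to match this topological notion exactly with the local content of norm-continuity of $\varphi_B$. Everything else is the routine verification that the explicit correspondence is bilinear-form-valued, $\tau_p$-continuous, and isometric.
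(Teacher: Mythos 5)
Your proposal is correct and follows essentially the same route as the paper: the same explicit identification $\varphi_B(t)(x,y) := B(x,y)(t)$, the same interchange of suprema for the isometry in (i), and the same use of the Arzel\`a--Ascoli theorem (norm-continuity of $\varphi_B$ being exactly equicontinuity of $B(B_X\times B_Y)$) for (ii). The only difference is that you justify the finiteness of $\sup_{t\in K}\|\varphi(t)\|$ in the surjectivity step via the uniform boundedness principle, a point the paper dismisses as ``not difficult to see.''
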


\begin{proof} We first prove (i). Let $B \in B (X \times Y, C(K))$ and define $\varphi: K \longrightarrow B (X \times Y, \K)$ by the relation
	\begin{equation}
		\varphi(t)(x, y) := B(x, y)(t) \ \ \left( t \in K \ \mbox{and} \ (x, y) \in X \times Y \right).
		\label{ck1}
	\end{equation}
	Then the function $t \longmapsto \varphi(t)(x, y)$ is continuous on $K$ for each $(x, y) \in X \times Y$ since $B(x, y) \in C(K)$ for each $(x, y) \in X \times Y$. Conversely, if $\varphi: K \longrightarrow B (X \times Y, \K)$ is a $\tau_p$-continuous function, we define $B \in B (X \times Y, C(K))$ as in (\ref{ck1}) and it is not difficult to see that $B$ is a continuous bilinear mapping such that $\|B\| = \|\varphi\|$. 
	
	  Now we prove (ii). Let $B: X \times Y \longrightarrow C(K)$ be a compact bilinear mapping. Consider $\varphi: K \longrightarrow B (X \times Y, \K)$ defined by (\ref{ck1}). We prove that $t \longmapsto \varphi(y)(x, y) = B(x, y)(t)$
	is norm-continuous. Let $(t_{\alpha})_{\alpha} \subset K$ be such that $t_{\alpha} \longrightarrow t_0 \in K$. Then
	\begin{eqnarray*}
		\|\varphi(t_{\alpha}) - \varphi(t)\| &=& \sup_{(x, y) \in B_X \times B_Y} \left| \varphi(t_{\alpha})(x, y) - \varphi(t_0)(x, y) \right| \\
		&=& \sup_{(x, y) \in B_X \times B_Y} \left| B(x, y)(t_{\alpha}) - B(x, y)(t_0) \right| \longrightarrow 0
	\end{eqnarray*}
	since $B(B_X \times B_Y) \subset C(K)$ is equicontinuous and bounded by the Arzel\`{a}-Ascoli theorem for $C(K)$ \cite[Theorem 7, p.266]{DS}. This shows that $t \longmapsto \varphi(t)(x, y)$ is norm-continuous for all $(x, y) \in X \times Y$. On the other hand, let $\varphi: K \longrightarrow B (X \times Y, \K)$ be a norm-continuous function. Define again $B: X \times Y \longrightarrow C(K)$ by the relation (\ref{ck1}). Given $\e > 0$ and $t_0 \in K$, there exists a neighborhood $U_{t_0}$ of $t_0$ such that if $t \in U_{t_0}$, then $\|\varphi(t) - \varphi(t_0)\| < \e$. So
	\begin{eqnarray*}
		\sup_{(x, y) \in B_X \times B_Y} | B(x, y)(t) - B(x, y)(t_0)| &=& \sup_{(x, y) \in B_X \times B_Y} |\varphi(t)(x, y) - \varphi(t_0)(x, y)| \\
		&=& \|\varphi(t) - \varphi(t_0)\| < \e.
	\end{eqnarray*}
	Hence if $t \in U_{t_0}$, then $| B(x, y)(t) - B(x, y)(t_0)| < \e$ for all $(x, y) \in B_X \times B_Y$. This shows that the set $B(B_X \times B_Y)$ is equicontinuous in $C(K)$ and since this set is already bounded, we may conclude that $B$ is a compact bilinear mapping.
\end{proof}

In order to show that the pair $(H_1 \times H_2, C(K))$ has our property for compact bilinear mappings, we will prove first that it is possible to carry the property from the pair $(X \times Y, \K)$ to the pair $(X \times Y, C(K))$ by using Proposition \ref{DKL14}. It is worth to mention that this result is already known for the Bishop-Phelps-Bollob\'as property for compact operators by using a different technique that the one that we are using in here \cite[Theorem 3.15(c)]{DGMM}.

\begin{theorem} Let $X$ and $Y$ be Banach spaces. Let $K$ be a compact Hausdorff topological space. Suppose that the pair $(X \times Y, \K)$ has the BPBpp for bilinear forms. Then the pair $(X \times Y, C(K))$ has the BPBpp for compact bilinear mappings.
	\label{DKL15}
\end{theorem}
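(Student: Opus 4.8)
The plan is to transport the problem through the isometric identification of Proposition \ref{DKL14}, apply the hypothesis fiberwise at a point of $K$ where the norm is attained, and then glue the resulting correction back in with a Urysohn bump function; compactness enters precisely to make this gluing uniform.

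First I would fix $\e > 0$, let $\eta(\cdot)$ be the function witnessing the BPBpp for bilinear forms of $(X \times Y, \K)$, and claim that $\eta(\e/2)$ witnesses the property for $(X \times Y, C(K))$. So take $B \in \mathcal{K}(X \times Y, C(K))$ with $\|B\| = 1$ and $(x_0, y_0) \in S_X \times S_Y$ such that $\|B(x_0, y_0)\| > 1 - \eta(\e/2)$. By Proposition \ref{DKL14} the map $B$ corresponds to a norm-continuous $\varphi : K \to B(X \times Y, \K)$ with $\varphi(t)(x, y) = B(x, y)(t)$ and $\|\varphi\| = \|B\| = 1$. Since $t \mapsto \varphi(t)(x_0, y_0) = B(x_0, y_0)(t)$ is continuous on the compact space $K$, the supremum $\|B(x_0, y_0)\| = \sup_{t \in K} |\varphi(t)(x_0, y_0)|$ is attained at some $t_0 \in K$. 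Thus $\|\varphi(t_0)\| \ge |\varphi(t_0)(x_0, y_0)| > 1 - \eta(\e/2)$ while $\|\varphi(t_0)\| \le 1$, i.e. the scalar bilinear form $\varphi(t_0)$ almost attains its (at most unit) norm at $(x_0, y_0)$. Applying the BPBpp for bilinear forms (in its $\|\cdot\| \le 1$ formulation, justified by the remark opening this section) to $\varphi(t_0)$ yields $\psi \in B(X \times Y, \K)$ with $\|\psi\| = 1$, $|\psi(x_0, y_0)| = 1$ and $\|\psi - \varphi(t_0)\| < \e/2$.

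Next I would localise the correction. Using the norm-continuity of $\varphi$ at $t_0$, choose an open neighborhood $U$ of $t_0$ on which $\|\varphi(t) - \varphi(t_0)\| < \e/4$; then $\|\psi - \varphi(t)\| < 3\e/4$ for every $t \in U$. As $K$ is compact Hausdorff, hence normal, Urysohn's lemma provides a continuous $h : K \to [0, 1]$ with $h(t_0) = 1$ and $h \equiv 0$ on $K \setminus U$. I then set $\varphi_A := (1 - h)\varphi + h\,\psi$ and let $A \in B(X \times Y, C(K))$ be the bilinear mapping corresponding to $\varphi_A$ under Proposition \ref{DKL14}. Being a combination of norm-continuous functions, $\varphi_A$ is norm-continuous, so $A$ is compact. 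Since $\|\varphi_A(t)\| \le (1 - h(t))\|\varphi(t)\| + h(t)\|\psi\| \le 1$ for all $t$, we have $\|A\| \le 1$; and as $\varphi_A(t_0) = \psi$ with $|\psi(x_0, y_0)| = 1$, it follows that $\|A(x_0, y_0)\| \ge 1$, forcing $\|A\| = \|A(x_0, y_0)\| = 1$. Finally, because the identification is linear and isometric, $\varphi_A - \varphi$ corresponds to $A - B$; the difference $\varphi_A(t) - \varphi(t) = h(t)\,(\psi - \varphi(t))$ vanishes off $U$ and has norm $< 3\e/4$ on $U$, so $\|A - B\| = \sup_{t \in K} \|\varphi_A(t) - \varphi(t)\| \le 3\e/4 < \e$.

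The step I expect to be the main obstacle --- and the reason the theorem is stated for compact mappings --- is the uniform control of $\|A - B\|$. This is exactly where compactness is used: by Proposition \ref{DKL14}(ii) compactness of $B$ is equivalent to norm-continuity of $\varphi$, which is what makes $\varphi(t)$ stay uniformly close to $\varphi(t_0) \approx \psi$ throughout the neighborhood $U$, and hence makes the gluing term $h(t)(\psi - \varphi(t))$ small everywhere. For a merely $\tau_p$-continuous $\varphi$ (the general, non-compact case) the quantity $\|\psi - \varphi(t)\|$ need not be small near $t_0$, and this localisation argument would break down.
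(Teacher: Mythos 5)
Your proof is correct, but the gluing step is genuinely different from the paper's. Both arguments share the same skeleton: reduce via Proposition \ref{DKL14} to a norm-continuous function $\varphi: K \longrightarrow B(X \times Y, \K)$, pick $t_0 \in K$ with $|\varphi(t_0)(x_0,y_0)| > 1 - \eta(\e/2)$, and apply the scalar BPBpp (in its $\|\cdot\| \leq 1$ formulation) to $\varphi(t_0)$ to get a norm-one form $\widetilde{B}$ attaining its norm at $(x_0,y_0)$ with $\|\widetilde{B} - \varphi(t_0)\| < \e/2$. Where you diverge is in how the corrected function is built. The paper perturbs \emph{globally}: it sets $\psi(t) := r\bigl(\varphi(t) + \widetilde{B} - \varphi(t_0)\bigr)$, where $r$ is the radial retraction onto the unit ball of $B(X \times Y, \K)$, and controls the damage of the retraction by the purely metric estimate $\|r(C) - C\| = \|C\| - 1 \leq \e/2$ whenever $1 \leq \|C\| \leq 1 + \e/2$; no neighborhoods or Urysohn functions appear, and the distance estimate $\|A - B\| < \e$ holds uniformly on $K$ without any localisation. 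You instead perturb \emph{locally}: a Urysohn bump supported in a neighborhood $U$ where $\|\varphi(t) - \varphi(t_0)\| < \e/4$, and the convex combination $(1-h)\varphi + h\widetilde{B}$, which stays in the unit ball automatically, so you never need a retraction or its estimates, and you leave $B$ untouched off $U$. One remark on your closing diagnosis: it is accurate for \emph{your} construction, where norm-continuity is what makes $\|\widetilde{B} - \varphi(t)\|$ small on $U$; in the paper's construction compactness is used differently --- the distance estimate there is global and needs no neighborhood, but norm-continuity is still essential because the retraction $r$ composed with a merely $\tau_p$-continuous $\varphi$ need not be $\tau_p$-continuous (the scalar factor $1/\max\{1,\|\cdot\|\}$ involves the norm, which is only lower semicontinuous for the pointwise topology), so without compactness the corrected function might not define a bilinear mapping into $C(K)$ at all. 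So both proofs genuinely require compactness, just at different steps.
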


\begin{proof} Given $\e > 0$, we consider $\eta(\e) > 0$ the BPBpp constant for the pair $(X \times Y, \K)$. Let $B \in \mathcal{K}(X \times Y, C(K))$ with $\|B\| = 1$ and $(x_0, y_0) \in S_X \times S_Y$ be such that $\|B(x_0, y_0)\|_{\infty} > 1 -\eta \left(\frac{\e}{2}\right)$. Now define $\varphi: K \longrightarrow B (X \times Y, \K)$ by the relation (\ref{ck1}). Since $B$ is compact, $\varphi$ is norm-continuous by using Proposition \ref{DKL14}. Consider $t_0 \in K$ such that
\begin{equation*}
|\varphi(t_0)(x_0, y_0)| = |B(x_0, y_0)(t_0)| > 1 - \eta\left(\frac{\e}{2}\right).
\end{equation*}
Then there is $\widetilde{B} \in B (X \times Y, \K)$ with $\|\widetilde{B}\| = 1$ such that
\begin{equation*}
|\widetilde{B}(x_0, y_0)| = 1 \ \ \ \mbox{and} \ \ \ \|\widetilde{B} - \varphi(t_0)\| < \frac{\e}{2}.
\end{equation*} 
Consider the retraction $r: B (X \times Y, \K) \longrightarrow B_{B (X \times Y, \K)}$ defined for $C \in B (X \times Y, \K)$ by
\begin{equation*}
r(C) := C \ \ \mbox{if} \ \ \|C\| \leq 1 \ \ \mbox{and} \ \ \ r(C) := \frac{1}{\|C\|} C \ \ \mbox{if} \ \ \|C\| \geq 1.
\end{equation*}
Now define the norm-continuous map $\psi: K \longrightarrow B (X \times Y, \K)$ by
\begin{equation*}
\psi(t) := r (\varphi(t) + \widetilde{B} - \varphi(t_0)) \ (t \in K).
\end{equation*}
Then $\psi (t_0) = r(\widetilde{B}) = \widetilde{B}$. Now consider $A: X \times Y \longrightarrow C(K)$ defined by $A(x, y)(t) := \psi(t)(x, y)$ for every $t \in K$ and $(x, y) \in X \times Y$. Then $A \in \mathcal{K}(X \times Y, C(K))$ with $\|A\| \leq 1$ and 
\begin{equation*}
1 \geq \|A\| \geq \|A(x_0, y_0)\|_{\infty} \geq |A(x_0, y_0)(t_0)| = |\psi(t_0)(x_0, y_0)| = |\widetilde{B}(x_0, y_0)| = 1.
\end{equation*}
Then $\|A\| = \|A(x_0, y_0)\|_{\infty} = 1$. It remains to prove that $\|A - B\| < \e$. To prove this, note first note that if $C \in B (X \times Y, \K)$ is such that $1 \leq \|C\| \leq 1 + \frac{\e}{2}$, then
\begin{equation*}
\|r(C) - C\| = \left\| \frac{1}{\|C\|}C - C \right\| = \|C\| - 1 \leq \frac{\e}{2}.
\end{equation*}
and then
\begin{eqnarray*}
\|A - B\| &=& \sup_{t \in K} \|\psi(t) - \varphi(t)\| \\
&=& \sup_{t \in K} \| r(\varphi(t) + \widetilde{B} - \varphi(t_0)) - (\varphi(t) + \widetilde{B} - \varphi(t_0)) + \widetilde{B} - \varphi(t_0)\| \\
&\leq& \frac{\e}{2} + \|\widetilde{B} - \varphi(t_0)\| < \e.
\end{eqnarray*}
This proves that the pair $(X \times Y, C(K))$ has the BPBpp for bilinear mappings.
	
\end{proof}

\begin{corollary} Let $H_1$ and $H_2$ be Hilbert spaces and let $K$ be a compact Hausdorff topological space. Then the pair $(H_1 \times H_2, C(K))$ has the BPBpp for compact bilinear mappings.
	\label{DKL16}
\end{corollary}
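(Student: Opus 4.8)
The plan is to obtain this statement as an immediate consequence of the two results that precede it, exactly as the pattern of the preceding corollaries in this section suggests. First I would invoke Corollary \ref{DKL12}: since $H_1$ and $H_2$ are Hilbert spaces, the scalar-valued pair $(H_1 \times H_2, \K)$ has the BPBpp for bilinear forms. This is the sole hypothesis needed to feed into the transfer theorem.

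Next I would apply Theorem \ref{DKL15} with $X = H_1$ and $Y = H_2$. That theorem asserts precisely that whenever $(X \times Y, \K)$ enjoys the BPBpp for bilinear forms, the pair $(X \times Y, C(K))$ inherits the BPBpp for \emph{compact} bilinear mappings, for any compact Hausdorff space $K$. Combining these two facts yields the claim directly.

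There is essentially no obstacle to overcome here: the content of the corollary has already been absorbed into Corollary \ref{DKL12} (which itself rests on Theorem \ref{DKL4} via the characterization in Theorem \ref{DKL10}) and into Theorem \ref{DKL15} (which rests on the $\tau_p$/norm-continuous identification of Proposition \ref{DKL14} and the retraction argument). The only thing to verify is that the hypotheses of Theorem \ref{DKL15} are met, and they are met verbatim. Accordingly, the proof is a one-line combination, and I would simply write that the result follows by applying Theorem \ref{DKL15} to the pair $(H_1 \times H_2, \K)$, which has the BPBpp for bilinear forms by Corollary \ref{DKL12}.
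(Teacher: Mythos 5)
Your proposal is correct and coincides exactly with the paper's own proof, which is literally stated as a combination of Corollary \ref{DKL12} and Theorem \ref{DKL15}. Nothing further is needed.
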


\begin{proof} 
	The proof is just a combination of Corollary \ref{DKL12} and Theorem \ref{DKL15}.
\end{proof}

\noindent {\bf Acknowledgement.} This article was written during the visit of the first author to the Dongguk University in Seoul, South Korea. He would like to thank all support and hospitality received there.

\end{document}